\newcommand{\ep}{\varepsilon}
\newcommand{\De}{\Delta}
\newcommand{\cL}{\mathcal{L}}
\newcommand{\cH}{\mathcal{H}}
\newcommand{\HH}{\mathcal{H}}
\newcommand{\cP}{\mathcal{P}}
\newcommand{\cF}{\mathcal{F}}
\newcommand{\cC}{\mathcal{C}}
\newcommand{\ZZ}{\ensuremath{\mathbb{Z}}}
\newcommand{\RR}{\ensuremath{\mathbb{R}}}
\newtheorem{theorem}{Theorem}[section]
\newtheorem{claim}[theorem]{Claim}
\newtheorem{lemma}[theorem]{Lemma}
\theoremstyle{definition}
\begin{document}

\begin{frontmatter}[classification=text]

\title{On the number of points in general position in the plane} 

\author[balogh]{
J\'ozsef Balogh\thanks{Partially supported by NSF DMS-1500121, NSA Grant H98230-15-1-0002,  Arnold O. Beckman Research Award (UIUC Campus Research Board 15006) and TT 12 MX-1-2013-0006.}}
\author[solymosi]{J\'ozsef Solymosi\thanks{
Partially supported by NSERC,  ERC Advanced Research Grant
AdG. 321104 and by Hungarian National Research Grant NK
104183.}}

\begin{abstract}
In this paper we  study some Erd\H{o}s type problems in discrete geometry.
Our main result is that  we show that there is a planar point set of $n$ points such that no four are collinear but no matter how we choose a subset of size $n^{5/6+o(1)} $ it contains a collinear triple. 
Another application studies $\ep$-nets in a point-line system in the plane.

 We prove the existence of some geometric constructions with a new tool, the so-called Hypergraph Container Method. 
\end{abstract}
\end{frontmatter}


\section{Introduction}\label{sec-intro}

We say that a set of $n\geq d+1$ points on the Euclidean space $\mathbb{R}^d$ is in {\it general  position} if no hyperplane contains $d+1$ points. One might expect that if in a set no hyperplane contains $d+2$ points then a large subset of the $n$ points can be selected such that the points of this subset are in general position. 
Determining how many points one can choose in general position in the worst case is a problem that, like many other similar problems, was initiated by Paul Erd\H{o}s \cite{problem3,problem2}. We consider the planar ($d=2$) case first. The best known upper and lower bounds are far from each other and even a sublinear upper bound was hard to achieve. Our  primary goal is  to prove the existence of a  better construction for the upper bound. Furthermore,  we apply similar methods  to other problems as well. 

An important concept in computational geometry and approximation theory in computer science is that of an $\ep$-net. On a base set $A$ of $n$ elements and a family of subsets ${\cal{F}}\subset 2^A$ a set $E\subset A$ is an $\ep$-{\it net} if $E\cap B\neq \emptyset$ for every $B\in \cal{F}$ such that $|B|\geq \ep n$. One generally expects that if $\cal{F}$ has  low complexity then it has a small  $\ep$-net. In a seminal paper, Haussler and Welzl~\cite{emo}  proved that for any range space of VC dimension $d,$ there is an $\ep$-net of size $O((d/\ep)\log{(d/\ep)})$. For the definitions and for further details we refer to \cite{komlos},  Chapter 15 in \cite{PaAg}, Chapter 10 in \cite{Mat}, or the survey paper of related problems \cite{FuPa}. Here we are concerned with a special case.
Answering a question by Matou\v{s}ek, Seidel, and Welzl~\cite{msw},  Alon~\cite{Alon} proved that the minimum possible size of an $\ep$-net for point objects and line ranges in the plane is super-linear in $1/\ep$. Alon used the density Hales-Jewett  theorem for the construction, so his bound was just enough to break linearity.  As he writes at the end of the paper "{\em The problem of deciding whether or not there are natural geometric range spaces of VC-dimension $d$ in which the minimum possible size of an $\ep$-net is $\Omega((d/\ep) \log(1/\ep)$ remains open. It seems plausible to conjecture that there are such examples, and even to speculate that this is the case for the range space of lines in the plane, for appropriately defined planar sets of points."}
We cannot quite answer this question, but we are able to give a better bound, $(1/\ep)\log^{1/3-o(1)}{(1/\ep)}$, for the range space of lines.
In related work, Pach and Tardos~\cite{PachTardos} constructed range spaces induced by axis-parallel rectangles in the plane, in which the size of the smallest  $ \varepsilon $-net is  $ \Omega \left (\frac {1}{\varepsilon }\log \log \frac {1}{\varepsilon }\right )$. By a theorem of Aronov, Ezra, and Sharir \cite{AES}, this bound is tight.

\medskip

Here, we consider problems concerning points in planes. In a follow-up paper we shall consider higher-dimensional variants. Variants of all of our methods  work there as well, but there are more technical details, so we decided to handle them separately.

We shall use the method of hypergraph containers for the proof. This useful method was recently introduced independently by Balogh, Morris and Samotij~\cite{BMS}, and by Saxton and Thomason~\cite{ST}.  Roughly speaking, it says that if a hypergraph $\cH$ has a uniform edge distribution, then one can find a relatively small collection of sets, {\em containers}, covering all independent sets in $\cH$.  One can also require that the container sets span only few edges.  In our applications this latter condition guarantees that all container sets are small. The right geometric construction determines a hypergraph where all large subsets contain an edge (i.e., a collinear triple). 

The container method has been applied in a similar way in the graph setting~\cite{NS} and \cite{RRS},
and in additive combinatorics~\cite{BLS}, and in several  more recent papers: here we list just those that motivated our work. The main contribution of this paper is to demonstrate that this machinery can be used in discrete geometry. It is possible that better results could be obtained, still using this method, if somebody managed to find more useful hypergraphs, or to prove a variant of the container theorem tailored to these particular hypergraphs.\footnote{Motivated by our work, Balogh and Samotij recently proved a variant of the container theorem, which improves the exponent for the $\ep$-net application from $1/3$ to $1/2$.}

We refer the readers to~\cite{BLS, BLST, BMS, ST} for more details and applications on the container method.

{\bf Notation.} The $O(f(n)), w(g(n)), o(h(n))$ notation are standard: $k(n)=O(f(n))$ if there is a constant  $C>0$ that  $k(n) \le C\cdot f(n)$;   $k(n)=\omega(g(n))$ if there is a constant  $C>0$ that $k(n) \ge C\cdot g(n)$; 
 $k(n)=o(h(n))$ if for every constant  $C>0$ we have $k(n) \le C\cdot h(n)$ for every $n$ sufficiently large.
We use logarithms to base $2$ unless otherwise indicated. Throughout the paper we omit floors and ceilings whenever they are not crucial.

\section{Results}

\subsection{Points in general position}

For an $n$-element set, $S\subset {\mathbb{R}}^2,$ let $S'\subset S$ be the largest subset of points in general position and let $\alpha(S)=|S'|.$
We define
$$\alpha(n):= \min \{\alpha(S):\  |S|=n \text{ and } S \text{ contains no four points in a line}\}.$$
Erd\H os~\cite{problem2} proposed the problem to determine  $\alpha(n)$. F\"uredi~\cite{furedi} proved that
\begin{equation}\label{furedibound}
\omega(\sqrt{n\log n})\le \alpha (n)=o(n).
 \end{equation}

 The lower bound was obtained using a general result of Phelps and R\"odl~\cite{PR}, who gave a lower bound on the independence number of partial Steiner Systems.  F\"uredi~\cite{furedi} proved  the upper bound using the Density Hales-Jewett Theorem (DHJ) of Katznelson and Furstenberg~\cite{FuKa1, FuKa2}. In this application F\"uredi takes a random (generic) projection of the combinatorial cube of the combinatorial space $\{1,2,3\}^N$ to the plane. This point set has no collinear four-tuple,  however for any $\delta>0$ if $N$ is large enough then any subset of points of size at least $\delta\cdot 3^N$ contains a collinear triple, a projection of a combinatorial line. The best known bound for DHJ follows from a recent proof  by D.H.J. Polymath \cite{Poly1}. A subset of $\{1,2,3\}^N$ of density $\delta$ contains a combinatorial line if $N$ is at least a tower of 2's of height $O(1/\delta^3).$  
 We improve the upper bound in \eqref{furedibound}  showing the existence of a  point set containing no four points in a line such that every subset of size $n^{5/6+o(1)} $ contains three points on a line.

\begin{theorem}\label{3and4}
As $n$ goes to infinity,
$$ \alpha (n)\ \le \ n^{5/6+o(1)}.$$
\end{theorem}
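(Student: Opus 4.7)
The plan is to apply the hypergraph container method to the 3-uniform hypergraph $\cH^\ast$ of collinear triples on a carefully chosen ambient point set $P^\ast \subset \RR^2$ much larger than $n$, and then to extract, via random subsampling, a subset of size $n$ that inherits the small-independence-number property while avoiding $4$-collinear tuples.

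First I would fix an ambient set $P^\ast$ (based on a grid $[m]^2$, or on a carefully chosen sub-structure thereof) of size $N$ polynomial in $n$, tuned so that: (i) $\cH^\ast$ is dense enough for the container theorem to be informative, with average degree $d$ growing polynomially in $N$, pair-codegree $O(\sqrt{N})$ and triple-codegree at most $1$; and (ii) a random $p$-sample with $pN = n$ has expected number of $4$-collinear quadruples small enough to kill them by deleting $o(n)$ points. After verifying the codegree conditions, I would invoke the hypergraph container theorem of~\cite{BMS, ST} on $\cH^\ast$ and iterate it, producing a family $\cF$ of containers such that every subset of $P^\ast$ in general position lies inside some $C \in \cF$, with $\log |\cF| \le n^{5/6+o(1)}$, and each $C$ spanning only an $\ep$-fraction of the collinear triples of $\cH^\ast$.

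The critical intermediate step is to upgrade ``spans few triples'' into a vertex bound: a geometric supersaturation statement showing that any such $C \subseteq P^\ast$ satisfies $|C| \le K^\ast$ with $K^\ast \le N \cdot n^{-1/6+o(1)}$. The proof combines the partial-Steiner structure (triple-codegree at most $1$) with an incidence count in the Szemer\'edi--Trotter spirit, applied to the parallel classes of lines in the grid. With the containers controlled in size, I would sample $P \subseteq P^\ast$ by keeping each point independently with probability $p = n/N$. Concentration gives $|P| = (1+o(1))n$, the choice of $N$ makes the expected number of $4$-collinear quadruples $o(n)$, and deleting one point from each such quadruple yields $P'$ with $|P'| = (1-o(1))n$ and no $4$ collinear. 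Then for each container $C \in \cF$, a Chernoff bound gives $\Pr\bigl(|C \cap P'| > n^{5/6+o(1)}\bigr) \le \exp\bigl(-n^{5/6+o(1)}\bigr)$, and a union bound (affordable because $\log |\cF| \le n^{5/6+o(1)}$) shows that with high probability $\max_{C \in \cF} |C \cap P'| \le n^{5/6+o(1)}$, which upper-bounds $\al(P')$.

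The main obstacle is the simultaneous calibration of several competing parameters: the density of $\cH^\ast$ pushes $N$ down, the abundance of $4$-collinear tuples in the grid pushes $N$ up, and the fingerprint parameter $\tau$ of the container theorem interacts with the supersaturation exponent. The exponent $5/6$ is the unique balance point at which all of these constraints become tight simultaneously. The genuinely new and most delicate ingredient is the geometric supersaturation step, which converts an edge-density statement about $\cH^\ast$ into a vertex-count statement about containers; this is where the specific structure of the parallel classes in the grid enters, and where the exponent $1/6$ in the container-size bound ultimately originates.
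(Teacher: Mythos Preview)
Your overall framework—iterated hypergraph containers, a supersaturation lemma to convert edge-sparsity into a vertex bound, random sampling followed by deletion of collinear $4$-tuples, and a union bound over containers—is exactly the architecture the paper uses. But there is a genuine gap in the choice of the ambient set $P^\ast$.

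You propose a planar grid $[m]^2$ (or a substructure of it) with pair-codegree $O(\sqrt{N})$. With that codegree profile the container method cannot reach the exponent $5/6$. In $[m]^2$ one has $N=m^2$, average degree $d\approx m^2$, $\De_2\approx m=\sqrt{N}$, and the natural supersaturation reads: a set of $m^{2-s}$ vertices contains $\gtrsim m^{4-3s}$ collinear triples. On a container of size $m^{2-s}$ the induced average degree is only $m^{2-2s}$, so the $\De_2$ term in $\De(\cH,\tau)$ forces $\tau\gtrsim m^{2s-1}/\ep$; the fingerprint cost $v\tau\approx m^{1+s}/\ep$ therefore \emph{grows} as the containers shrink. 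Balancing $\log|\cC|$ against $|C|\cdot p$ (with $p\approx m^{-1}$ forced by the $\approx m^5$ collinear $4$-tuples in $[m]^2$) one finds that the best achievable bound is $\alpha\le n^{1-o(1)}$, i.e.\ no improvement.

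The paper's key idea, which your proposal is missing, is to work on the $3$-dimensional grid $[n]^3$ and project to the plane only at the end. There $N=n^3$, $d\approx n^3$, but $\De_2\approx n=N^{1/3}$, so the $\De_3$ term becomes the bottleneck and one may take $\tau=n^{s-4/3}$, giving $v\tau\approx n^{5/3}$ \emph{uniformly} in $s$. The supersaturation lemma (any $n^{3-s}$ vertices span $\gtrsim n^{6-4s}/\log n$ collinear triples, proved by a direct double-count over an explicit family of prime-direction lines rather than Szemer\'edi--Trotter) lets the iteration run until containers have size $n^{8/3+o(1)}$. Since collinear $4$-tuples number only $\approx n^6\log n$ in $[n]^3$, one may take $p\approx 1/(n\sqrt{\log n})$, producing a final set of size $\approx n^2$ with $\alpha\le n^{5/3+o(1)}=(n^2)^{5/6+o(1)}$. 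The exponent $5/6$ is precisely the outcome of using the cube; the $\sqrt{N}$ pair-codegree you assume is the obstacle.
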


It is far from clear whether $5/6$ is the right constant in the exponent. We believe it is likely that the point set that we constructed is close to optimal, but we are much less confident that 5/6 is the correct exponent for this set. 

\subsection{$\ep$-nets and weak $\ep$-nets}

Here, for a given small $\ep^*>0$, we shall construct a point set $S$ in the plane such that the smallest $R\subset S$ that covers every line containing at least $\ep^* |S|$ points from $|S|$ has size at least $(1/\ep^*) \log^{1/3-o(1)}(1/\ep^*)$.

\begin{theorem}\label{general}
Let $\ep^*$ be an arbitrary small constant. Then there exists a point system $S$ in the plane, that if $T\subset S$ intersects each line that contains at least $\ep^*|S|$ points, i.e., $T$ is an $\ep^*$-net, then 
$$|T|\ge  \frac {1}{2\ep^*} \log^{1/3}\left(\frac {1}{\ep^*}\right)\log\log \left(\frac {1}{\ep^*}\right)^{-1}.$$
\end{theorem}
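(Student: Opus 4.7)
The plan follows the container-based strategy behind Theorem~\ref{3and4}: start with a dense point configuration arising from a generic projection of a combinatorial grid, and then use the hypergraph container method to force every $\ep^*$-net of the resulting set to be large. Concretely, I would fix $k = k(\ep^*)$ and $N = N(\ep^*)$ with $k^{1-N} \approx \ep^*$ and $k$ of order $\log^{O(1)}(1/\ep^*)$, and take $P$ to be the image of the combinatorial cube $[k]^N$ under a generic linear projection $\pi \colon \RR^N \to \RR^2$. Each combinatorial line projects to a plane line containing exactly $k = \ep^*|P|$ points of $P$, giving a family $\cL$ of $\approx Nk^{N-1}$ heavy lines, any two of which share at most one point of $P$ by genericity. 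This replaces F\"uredi's DHJ-based construction with one whose parameters can be tuned against the quantitative bounds of the container theorem.

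Next, I would introduce the $3$-uniform hypergraph $\cH$ on $P$ whose edges are collinear triples lying along lines of $\cL$. The one-point intersection of heavy lines gives a codegree bound, and a direct count yields a degree bound, so the container theorem of~\cite{BMS, ST} applies and produces a small family $\cC$ of containers, each of size at most $(1-\gamma)|P|$ and spanning few edges of $\cH$, such that every triple-free subset of $P$ lies in some $C \in \cC$. I would take $S$ to be $P$ itself (or a random subsample inheriting a constant fraction of $\cL$), and argue as follows: if $T \subseteq S$ is an $\ep^*$-net with $|T|$ below the stated target, then each $\ell \in \cL$ still meets $S\setminus T$ in at least $\ep^*|S|-1$ points; summing over $\ell$, the set $S \setminus T$ contains far more collinear triples on heavy lines than the container theorem's edge-sparsity budget allows, a contradiction. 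Tracking the three parameters---container count, container size, and heavy-line count $|\cL|\approx|P|\log(1/\ep^*)$---produces precisely the $\log^{1/3}(1/\ep^*)/\log\log(1/\ep^*)$ improvement over the trivial bound $|T|\ge 1/\ep^*$.

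The main technical obstacle is bridging the gap between ``$T$ fails to be an $\ep^*$-net'' and ``$S \setminus T$ violates a container's edge budget'': the set $S\setminus T$ is not triple-free, only sparse in triples on heavy lines, so a direct container application does not go through. I expect this step to require a supersaturation or averaging argument that amplifies a single poorly hit line into many forbidden configurations, thereby forcing container membership quantitatively. A secondary difficulty is calibrating $k$ and $N$ so that the cube-root of the container-theorem loss factor matches the heavy-line count to the entry $\log^{1/3}$ appearing in the statement; the footnote in Section~\ref{sec-intro} indicates that the refined Balogh--Samotij container variant upgrades this exponent to $1/2$.
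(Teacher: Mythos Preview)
The gap you flag in your last paragraph is genuine and is precisely why the $3$-uniform approach does not succeed here. If $T$ is an $\ep^*$-net, the set $S\setminus T$ still contains plenty of collinear triples on heavy lines (each such line retains $k-1$ points), so $S\setminus T$ is nowhere near independent in your $3$-uniform $\cH$, and no edge-sparsity budget for containers will be violated. Your proposed fix---amplifying ``one poorly hit line'' into many forbidden configurations---cannot work, because an $\ep^*$-net by definition hits \emph{every} heavy line; there is no poorly hit line to amplify.

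The paper avoids this entirely by working with the $r$-uniform hypergraph $\cH(n,r,r)$ of \emph{all} collinear $r$-tuples in $[n]^r$, with $r=r(\ep^*)$ growing as $\log^{1/3}(1/\ep^*)/\log\log(1/\ep^*)$. The construction is a $p$-random subset of $[n]^r$ with $p\approx r/n$, followed by deleting one point from every surviving collinear $(r+1)$-tuple; the choice of $p$ ensures $o(p n^r)$ deletions, so the resulting set $S$ has $|S|\approx p n^r = r/\ep^*$ and, crucially, \emph{no line contains more than $r$ points of $S$}. Now the key observation is clean: if $T\subset S$ meets every line with $\ge \ep^*|S|=r$ points, then $S\setminus T$ contains no collinear $r$-tuple at all, i.e.\ it is a genuine independent set in $\cH$. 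A single application of Theorem~\ref{thm-container} (with $\ep=10^{-r^2}$, $\tau=n^{-1-0.9/(r-1)}$) together with the supersaturation Lemma~\ref{supsatkbr} shows that the random $S$ has no independent set of size $|S|/2$, forcing $|T|\ge |S|/2 = r/(2\ep^*)$. The exponent $1/3$ arises because the container computation requires $n^{0.1}\gg 10^{r^2}$ and $n^{0.1/r}\gg r^r\log n$, i.e.\ $r\lesssim \log^{1/2}n/\log\log n$, while $\ep^*\approx 2^{-r^3}$; solving gives $r\approx \log^{1/3}(1/\ep^*)/\log\log(1/\ep^*)$. So the two missing ideas are: (i) let the uniformity $r$ grow, and (ii) subsample and clean so that ``complement of a net'' equals ``independent set'' exactly, eliminating the bridging step you were worried about.
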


Note that the $O(\frac{1}{\ep} (\log \frac{1}{\ep}))$ upper bound holds for any abstract range space with fixed VC-dimension, see \cite{emo}.  

There is a variant of the $\ep$-net problem where the hitting set $T$ does not need to  be a subset of the base set. Given a point set $S$ in the plane, an $T$ subset of the plane is a {\it weak} $\ep$-{\it net} if every line containing at least $\ep|S|$ points of $S$ contains a point of $T$. Note that $T$ is not necessarily a subset of $S$.   We are interested proving the existence of a set $S$ such that the smallest point set that contains a point from every {\bf line} that is incident to at least $\ep|S|$ points of $S$ is large. Alon~\cite{Alon} gave a lower bound for the size of weak nets as well, as in the previous case using the density Hales-Jewett theorem. We also give a construction to bound the size of weak nets, with the additional feature that it works in the projective plane as well.

Here we can prove only a weaker bound than the one for the non-weak case, but it has a similar form.

\begin{theorem}\label{weaknet}
Let $\ep^0$ be an arbitrary small constant. Then there is a $0< \ep^* < \ep^0$ such that the following holds. There exists a point system $S$ in the plane, that if $T$ intersects each line containing at least $\ep^*|S|$ points, i.e., $T$ is a weak $\ep^*$-net, then 
$$|T|\ge  \frac {1}{10\ep^*}  \left( \log\log \frac {1}{\ep^*}\right)^{1/10}.$$
\end{theorem}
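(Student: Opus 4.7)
The essential difficulty, relative to Theorem~\ref{general}, is that the hitting set $T$ may now include arbitrary planar points, not only points of $S$. A trivial observation limits how much external points can help: for any $p \in \mathbb R^2$, the lines through $p$ partition $S \setminus \{p\}$ into disjoint classes, so $p$ lies on at most $\lfloor 1/\ep^*\rfloor$ lines that each contain $\ge \ep^*|S|$ points of $S$. Writing $L$ for the number of heavy lines in $S$, any weak net must therefore have size at least $L\ep^*$. But pair-counting forces $L \le (1+o(1))/(\ep^*)^2$, so this crude estimate is at best $O(1/\ep^*)$, short of the $(\log\log (1/\ep^*))^{1/10}$ factor we want; a genuinely finer argument is needed.

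My plan is to amplify the construction used in Theorem~\ref{general} (by a recursive or product-type step) and to reapply the container method to a \emph{finite} enriched vertex set rather than to $S$ alone. The key geometric input is that every planar point of degree $\ge k$ with respect to the heavy-line family is the intersection of $\binom{k}{2}$ pairs of heavy lines, so the number of such points is at most $\binom{L}{2}/\binom{k}{2}$; in particular, the external points that are genuinely useful as hitters form a discrete set of size polynomial in $L$, and hence polynomial in $1/\ep^*$. I would then pass to the enlarged vertex set $S \cup X$, where $X$ consists of all pairwise intersection points of heavy lines, form a hypergraph whose edges record collinear triples (or heavy lines directly), and apply containers to bound the size of any transversal from below. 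This mirrors the analysis behind Theorem~\ref{general}, but on a larger vertex set, with the extra volume costing a polylog factor in the final bound.

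The main obstacle will be designing a base construction with enough heavy lines that are sufficiently ``independent'' against external covering, i.e., so that no small cluster of external points simultaneously covers a large fraction of them. In the strong setting this is automatic because hitters lie in $S$ and the container bound on $S$ controls them directly; in the weak setting, each structural constraint added to $S$ in order to block a new family of external hitters costs a polylogarithmic factor in the final lower bound on $|T|$, and this is precisely why the exponent degrades from $1/3$ in Theorem~\ref{general} to $1/10$ here. Checking that the amplified construction simultaneously has many heavy lines, admits the container analysis on $S\cup X$, and is robust against clusters of external hitters is the delicate technical core, analogous in spirit to (though quantitatively much weaker than) the Pach--Tardos construction for axis-parallel rectangles.
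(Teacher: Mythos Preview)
Your proposal has a genuine gap: you have not identified a construction in which external hitters are genuinely weak, and the plan to ``run containers on $S\cup X$'' does not repair this. The paper's proof does \emph{not} enlarge the vertex set; instead it chooses the base set so that arbitrary planar points are nearly useless as hitters and then applies Alon's switching trick. Concretely, the construction is $[k]^n$ with only the \emph{axis-parallel} lines declared heavy, followed by a Veronese-type embedding $\nu_n:(x_1,\dots,x_n)\mapsto(\text{all nonempty products of the }x_i)$ into $\RR^{2^n-1}$ and a generic projection to the plane. The crucial geometric fact is that after this embedding no three heavy lines are concurrent except at an original grid point; hence any non-grid point of the plane covers at most two heavy lines. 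Grid points not in $S$ are handled by an extra sparsification: from every $t$-tuple of heavy $r$-lines through a single grid point, one random point is deleted, so that afterwards each grid point lies on at most $t=\sqrt r$ heavy lines. Now any weak net $T$ is converted to a subset of $S$ of size at most $t|T|$ (replace each point of $T_g$ by two points of $S$, each point of $T_w$ by at most $t$ points of $S$), and the strong-net lower bound from the container argument gives $|T|\ge |S|/(5t)$.

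Your alternative---adding all pairwise intersections $X$ of heavy lines and applying containers to $S\cup X$---does not work as stated. First, the hypergraph you would need is unclear: if the edges are still collinear $r$-tuples in $S$, the new vertices in $X$ are isolated and containers say nothing about them; if instead you make each heavy line a hyperedge on $S\cup X$, the codegree conditions collapse because a single point of $X$ can lie on up to $\lfloor 1/\ep^*\rfloor$ heavy lines, which is exactly the obstruction you identified at the outset. Second, $|X|$ can be as large as $\binom{L}{2}\approx (1/\ep^*)^4$, so even if a container bound held, the $N\tau$ term in Theorem~\ref{thm-container} would swamp the gain. The missing idea is not a clever container application on a bigger set but a construction that \emph{forces} high-degree external points to be rare and identifiable (here: grid points), together with a second sparsification that caps their degree at $t\ll r$.
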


\subsection{Decomposing coverage of the plane}

   The point-line duality of the plane maps points into lines and lines into points in such a way that the incidences (and non-incidences) are preserved. 
Let $\cF$ be a family of sets  over $X$. We say that $\cF$ is an $r$-{\it fold covering} of $X$ if for every $x\in X$ there are at least $r$ sets in $\cF$ containing $x$.
A family of sets $\cF$ over $X$ is {\it cover-decomposable} if there exists an $r$ such that any $r$-fold covering of $X$ with members of $\cF$ can be decomposed into two coverings. 
Pach, Tardos and T\'oth~\cite{pachtt}, using the Hales-Jewett Theorem, proved that when $\cF$ is the set of lines, and $X=\RR^2$, then this family is not cover-decomposable.

 In particular, for every $r$ they proved the existence of a finite $X\subset \RR^2$ and a finite  family of lines $\cF$ such that every point of $X$ was covered by at least $r$ lines of $\cF$, but $\cF$ was not 
  decomposable into two coverings. 
    Following their proof, one can read out a density version, i.e., an estimate on how large the   smallest covering subset of $\cF$ must be.

  In Section~\ref{epsnetsection}  for every $c>0$ and $r$ we prove the existence of a point set $S$ in the plane with the property that any subset of $S$ of size $c\cdot |S|$ contains a collinear $r$-tuple. Denote by $\cL$ the collection of lines containing $r$ points from $S$.
The dual  of $S$, denoted $S^*$, is a family of lines, the dual  of $\cL$, denoted $\cL^*$, is a family of points, with the property that each point of $\cL^*$ is covered by exactly $r$ lines of $S^*$. Additionally, for every subset of $S^*$ of size at least $c|S^*|$ there is a point which is covered by $r$ lines from $S^*$, i.e., it is uncovered by the complement of that subset.
 
   In other words, we give an alternative proof of the result of \cite{pachtt}, with a better quantitive bound on $c$ as a function of the size of $S$.

Mani-Lavistka and Pach~\cite{mani} observed that the Lov\'asz Local Lemma could be used to prove the following result.

\begin{theorem} \label{pach}
Let $T$ and $r$ be positive integers satisfying the inequality $e(T+1)< 2^{r-1}$.
If $\cP$ is a finite point system and $\cL$ is a finite line system in the plane such that every point is covered at least  $r$ times and every line is intersected (in a point from $\cP$) by at most $T$ lines, then $\cL$ can be decomposed into two covering systems.
\end{theorem}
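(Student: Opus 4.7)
The plan is to apply the symmetric Lov\'asz Local Lemma to a uniform random 2-coloring of the lines. First, color each line of $\cL$ red or blue independently with probability $1/2$; for every point $p \in \cP$, let $A_p$ denote the event that all lines of $\cL$ through $p$ receive the same color. If no $A_p$ occurs, then the red lines and the blue lines each cover every point of $\cP$, yielding the desired decomposition of $\cL$.

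Since $p$ is covered by at least $r$ lines of $\cL$, one has $\Pr(A_p) \le 2 \cdot 2^{-r} = 2^{-(r-1)}$. The event $A_p$ depends only on the colors of the lines through $p$, so $A_p$ and $A_q$ are mutually independent whenever no line of $\cL$ contains both $p$ and $q$. Since two distinct points lie on at most one common line, the dependency graph on $\{A_p : p \in \cP\}$ has an edge between $A_p$ and $A_q$ precisely when the line $pq$ belongs to $\cL$.

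The crucial combinatorial step is to bound the maximum degree $\Delta$ of this dependency graph by $T$. Fix $p \in \cP$ and write $k \ge r$ for the number of lines of $\cL$ through $p$. For any line $\ell \in \cL$ with $p \in \ell$, the $k-1$ other lines through $p$ all cross $\ell$ at the point $p$, and at every further point $q \in \ell \cap \cP \setminus \{p\}$ at least $r-1$ additional lines of $\cL$ cross $\ell$ at $q$ (because $q$ is $r$-covered). Combining these counts with the hypothesis that $\ell$ meets at most $T$ other lines of $\cL$ at points of $\cP$, and summing the resulting estimates over all $k$ lines of $\cL$ through $p$, yields the desired bound $\Delta \le T$.

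Finally, the assumption $e(T+1) < 2^{r-1}$ is exactly $e \cdot 2^{-(r-1)}(T+1) < 1$, which is the hypothesis of the symmetric LLL with event probability $2^{-(r-1)}$ and dependency degree $T$. Thus $\Pr\bigl(\bigcap_{p \in \cP} \overline{A_p}\bigr) > 0$, a valid 2-coloring exists, and $\cL$ decomposes into two covering systems as claimed. The main obstacle lies in the bookkeeping that produces $\Delta \le T$: one has to simultaneously exploit the $k-1$ crossings at $p$ and the $r-1$ crossings at each remaining $\cP$-point on every line through $p$; the random-coloring setup and the LLL invocation are otherwise routine.
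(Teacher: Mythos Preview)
Your framework is exactly the paper's one-sentence sketch: two-colour $\cL$ uniformly at random, let $A_p$ be the event that the hyperedge $F_p=\{\ell\in\cL: p\in\ell\}$ is monochromatic, and invoke the symmetric Local Lemma. The paper simply records that LLL succeeds provided no $F_x$ meets more than $T$ other hyperedges; it does not attempt to derive that bound from the line-crossing hypothesis.

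The gap is in the ``crucial combinatorial step'' you add. From $(k-1)+(r-1)\,m_\ell\le T$ you correctly get $m_\ell\le (T-k+1)/(r-1)$ for each of the $k$ lines $\ell$ through $p$, but summing over those $k$ lines yields only
\[
\Delta\ \le\ \frac{k(T-k+1)}{r-1},
\]
and this is \emph{not} bounded by $T$ in general: already at $k=r$ it equals $r(T-r+1)/(r-1)$, which exceeds $T$ as soon as $T>r(r-1)$, whereas the hypothesis $e(T+1)<2^{r-1}$ permits $T$ of exponential size in $r$. The underlying issue is structural: the theorem's hypothesis bounds, for each \emph{line}, how many other lines meet it at a point of $\cP$, while the LLL dependency degree asks, for each \emph{point}, how many other points share a line with it. These are dual quantities in the point--line incidence structure and neither controls the other, so no rearrangement of the ingredients you list (the $k-1$ crossings at $p$ and the $r-1$ crossings at each remaining $\cP$-point of $\ell$) will produce $\Delta\le T$. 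Your write-up is therefore no less complete than the paper's sketch, but the extra paragraph does not establish what it claims.
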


A slight modification of the construction from Section~\ref{epsnetsection}  shows some limits on the bounds in Theorem~\ref{pach}, but it is not clear what the proper order of magnitude of $T$ should be.

\begin{theorem} \label{const}
Let $r$ be a positive integer, and $T:=r^{5r^2}$.
Then there is  finite point system $\cP$ and a finite line system $\cL$ in the plane  that every point is covered at least  $r$ times, and every line is intersected (in a point from $\cP$) by at most $T$ lines from $\cL$, and  $\cL$ cannot be decomposed into two covering systems.
\end{theorem}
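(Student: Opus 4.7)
The plan is to dualize a suitable planar configuration, starting from the construction in Section~\ref{epsnetsection} used to prove Theorem~\ref{general} but tuned so each ``line'' contains exactly $r$ points rather than $3$. Concretely, I want a planar point set $S$ together with a family $\cL_0$ of lines, each meeting $S$ in exactly $r$ points, satisfying (A) for some fixed $c<1/2$, every $S'\subset S$ with $|S'|\ge c|S|$ contains all $r$ points of some $\ell\in\cL_0$; and (B) every $s\in S$ lies on at most $r^{5r^2-1}$ lines of $\cL_0$. The natural candidate is a generic planar projection of the combinatorial cube $\{1,\dots,r\}^N$, with $N=O(r^2\log r)$ chosen just large enough for (A) and just small enough for (B). The images of combinatorial lines form $\cL_0$, and genericity ensures these are in bijection with planar lines meeting $S$ in $r$ points. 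For (B), a vertex $v$ of the cube lies on exactly $\sum_{a=1}^r(2^{n_a(v)}-1)\le 2^N$ combinatorial lines, where $n_a(v)=|\{i:v_i=a\}|$, and $N\le (5r^2-1)\log_2 r$ forces $2^N\le r^{5r^2-1}$. For (A), I plan to invoke the hypergraph container argument that drives Theorem~\ref{general}, iterated to push the line-free density of the cube strictly below $1/2$.

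Assuming (A) and (B), pass to the point--line dual: set $\cL:=S^*$ and $\cP:=\cL_0^*$. Duality preserves incidences, so each $\ell^*\in\cP$ is covered by exactly the $r$ dual lines corresponding to the $r$ primal points of $\ell$, giving an $r$-fold cover of $\cP$. A line $s^*\in\cL$ meets another $t^*\in\cL$ at a point of $\cP$ precisely when $s,t$ both lie on some common $\ell\in\cL_0$; each such $\ell$ contributes at most $r-1$ partners, and by (B) there are at most $r^{5r^2-1}$ such $\ell$ through $s$, so $s^*$ is met by at most $(r-1)\cdot r^{5r^2-1}<r^{5r^2}=T$ other lines at points of $\cP$, as required.

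For non-decomposability, suppose for contradiction $\cL=\cL_1\sqcup\cL_2$ is a decomposition with both parts covering $\cP$. Pigeonhole yields an $i$ with $|\cL_i|\ge|\cL|/2>c|\cL|$, so the primal subset $S_i\subset S$ corresponding to $\cL_i$ has $|S_i|\ge c|S|$; by (A), $S_i$ contains all $r$ points of some $\ell\in\cL_0$. Dualizing once more, all $r$ lines of $\cL$ through $\ell^*$ lie in $\cL_i$, so $\cL_{3-i}$ fails to cover $\ell^*$---a contradiction.

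The delicate step is (A). Density Hales--Jewett applied directly would force $N$ to be a tower in $1/c$ for fixed $c<1/2$, destroying (B). What is really needed is the container--based iteration already exploited in Section~\ref{epsnetsection}: starting from containers of density bounded away from $1$, one composes them until the maximum container density is below $1/2$ while keeping $N$ only polynomial in $r$. Verifying that this iteration actually reaches the threshold $c<1/2$ in the $r$-uniform combinatorial--line setting with $N=O(r^2\log r)$, rather than just in the $3$-uniform setting that suffices for Theorem~\ref{general}, is the main quantitative obstacle.
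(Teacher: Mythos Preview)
Your reduction via duality is clean and matches the paper's strategy of proving the dual statement (Theorem~\ref{dconst}) first. The counting in (B) and the non-decomposability argument from (A)+(B) are both correct. The gap is entirely in (A), and it is not a technicality.

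What you are asking for in (A) is that every subset of $[r]^N$ of density at least $1/2$ contains a combinatorial line, with $N=O(r^2\log r)$. This already implies the $2$-colour Hales--Jewett number $HJ(r,2)$ is polynomial in $r$, which is wide open; Shelah's bound is tower-type, and no polynomial bound is known even for two colours. The container method does not shortcut this: to conclude that a container has density below $1/2$ you must know that every set of density $\ge 1/2$ spans more than $\ep\cdot e(\cH)$ edges, and such a supersaturation statement for combinatorial lines is itself of density-Hales--Jewett strength. The averaging proof of Lemma~\ref{supsatkbr} that you appeal to works because geometric lines in $[n]^k$ carry up to $n\gg r$ grid points, so a pigeonhole over a well-chosen line family forces many collinear $r$-tuples. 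In your setting every combinatorial line has exactly $r$ points, so ``contains a collinear $r$-tuple'' means ``contains the whole line,'' and convexity/averaging gives nothing. (Also, Section~\ref{epsnetsection} applies the container lemma once, not iteratively; the iteration is in Section~5 and again relies on the geometric supersaturation Lemma~\ref{supsatkr3}.)

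The paper sidesteps all of this by working not in $[r]^N$ but in the grid $[n]^n$ with $n=2^{r^4}$, taking only \emph{axis-parallel} lines (this is the hypergraph $\cF(n,n,r)$). Each such line has $n$ points, so supersaturation (Lemma~\ref{supsatweak}) is an immediate Jensen argument. A single application of the container theorem then gives few containers, each of density at most a small $\gamma$. The crucial extra step you are missing is a \emph{random sparsification}: one keeps each grid point independently with probability $p\approx r^{4r}/n^{1+1/(r-1)}$, deletes a point from every surviving collinear $(r+1)$-tuple, and also deletes every point lying on more than $r^{5r^2-1}$ surviving $r$-rich lines (a Chernoff bound shows few such deletions). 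A union bound over containers then shows that with positive probability the random set has no independent set of size $10\gamma p n^n<\tfrac12|\cP|$, which is exactly the input your non-decomposability argument needs. Projection to the plane finishes the proof. The degree bound $T=r^{5r^2}$ comes from the deletion step, not from any intrinsic property of the cube.
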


For completeness we sketch the proof of Theorem~\ref{pach}. Form a hypergraph with vertex set $\cL$ and edge set $\{F_x:\ x \in \cP\}$, where $F_x$ is the collection of lines containing $x$. By the  Lov\'asz Local Lemma it is $2$-colorable if no $F_x$ is intersected by more than $T$ other hyperedges.\\

We  prove the dual statement of Theorem~\ref{const}. Note that here when we say that a collection of points $\cP$ cannot be decomposed into two covering systems of a family of lines $\cL$, we mean that $\cP$ cannot be partitioned into $\cP_1$ and 
$\cP_2$ in such a way that every line of $\cL$ contains a point from both $\cP_1$ and 
$\cP_2$.

\begin{theorem} \label{dconst}
Let $r$ be a positive integer, and let $T=r^{5r^2}$.
Then there is a finite line system $\cL$ and a finite point system $\cP$ in the plane such that \\
(i) every line contains  at least  $r$ points, \\
(ii)  every point from $\cP$ shares a line   from $\cL$ by at most $T$ points from $\cP$, \\
(iii)  $\cP$ cannot be decomposed into two covering systems.
\end{theorem}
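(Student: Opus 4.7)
\medskip

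\noindent\textbf{Proof plan for Theorem~\ref{dconst}.} The statement is visibly the projective dual of Theorem~\ref{const}, so the plan is simply to apply a standard point-line duality of the plane to the configuration produced in Theorem~\ref{const}, and then translate each of the three hypotheses/conclusions one at a time. First, I would take the pair $(\cP_0,\cL_0)$ from Theorem~\ref{const}, view it inside the projective plane (extending any ``line at infinity'' by an arbitrary generic projective transformation if needed so that no bookkeeping gets confused by parallel lines), and let $D$ be a polarity of $\mathbb{RP}^2$. Set $\cL:=D(\cP_0)$ (so each point $p\in\cP_0$ becomes a line $p^*\in\cL$) and $\cP:=D(\cL_0)$ (so each line $\ell\in\cL_0$ becomes a point $\ell^*\in\cP$). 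Because $D$ preserves incidences, the incidence graph between $\cL$ and $\cP$ is naturally isomorphic to the incidence graph between $\cP_0$ and $\cL_0$ with the two sides swapped. If for some reason one prefers to avoid projective duality, the same bookkeeping can be carried out affinely after a generic translation, but the projective version is cleanest.

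Next I would verify the three items. Condition (i) --- every line of $\cL$ contains at least $r$ points of $\cP$ --- becomes, under $D^{-1}$, the statement that every point of $\cP_0$ lies on at least $r$ lines of $\cL_0$, which is exactly the $r$-fold covering hypothesis of Theorem~\ref{const}. Condition (ii) --- for every $\ell^*\in\cP$, the number of other points of $\cP$ lying on a common line of $\cL$ with $\ell^*$ is at most $T$ --- is, under $D^{-1}$, the statement that every line $\ell\in\cL_0$ is met (at a point of $\cP_0$) by at most $T$ other lines of $\cL_0$, which is exactly the bound on ``crossings'' in Theorem~\ref{const}. Finally, condition (iii) is precisely the dual of the non-decomposability clause: a partition $\cP=\cP_1\sqcup\cP_2$ such that every line of $\cL$ contains a point from each $\cP_i$ corresponds, under $D^{-1}$, to a partition $\cL_0=\cL_0^{(1)}\sqcup\cL_0^{(2)}$ such that every point of $\cP_0$ is covered by both parts, i.e.\ a decomposition of $\cL_0$ into two coverings. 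The non-existence of the latter, guaranteed by Theorem~\ref{const}, therefore gives the non-existence of the former.

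The only real obstacle is a cosmetic one: confirming that ``shares a line from $\cL$ by at most $T$ points from $\cP$'' in (ii) is the exact literal dual of ``every line is intersected (in a point from $\cP$) by at most $T$ lines'' in Theorem~\ref{const}. I would write out this correspondence once carefully, since the natural parameter $T$ on one side counts ``other lines through the same $\cP$-point,'' while on the dual side it counts ``other $\cP$-points on the same $\cL$-line through a given $\cP$-point''; these are the same quantity under $D$, but a reader should see that explicitly. With that identification spelled out, the constant $T=r^{5r^{2}}$ is carried over unchanged from Theorem~\ref{const}, and the proof reduces to the three one-line translations above.
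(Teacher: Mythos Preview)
Your duality translation is correct in itself, but the overall argument is circular in the context of this paper. You are assuming Theorem~\ref{const} as input, yet in the paper Theorem~\ref{const} is not proved independently: the sentence ``We prove the dual statement of Theorem~\ref{const}'' just before Theorem~\ref{dconst} means that the authors establish Theorem~\ref{const} \emph{by} first proving Theorem~\ref{dconst} and then dualizing. So the only substantive proof in this block is the direct proof of Theorem~\ref{dconst}; deriving \ref{dconst} back from \ref{const} proves nothing.

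What the paper actually does for Theorem~\ref{dconst} is a direct container argument, parallel to the $\ep$-net construction but on the ``axis-parallel'' hypergraph $\cF=\cF(n,n,r)$ with $n=2^{r^4}$. One computes $v(\cF)$, $e(\cF)$, $d(\cF)$ and $\Delta_i$, sets $\ep=0.5\gamma^r$, $\tau=r\,2^{r}/n^{1+1/(r-1)}$, $p=r^{4r}/n^{1+1/(r-1)}$, checks the $\Delta(\cF,\tau)$ condition, and applies Theorem~\ref{thm-container} together with the supersaturation Lemma~\ref{supsatweak} to get containers of size at most $\gamma\,v(\cF)$. A $p$-random subset is then cleaned by deleting points on lines with $r+1$ random points and points lying on more than $T=r^{5r^2-1}$ rich lines (Chernoff), and a first-moment computation shows no independent set of size $m=10\gamma p n^n$ survives. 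Since the complement of any covering set is independent, this yields (iii); (i) and (ii) come from the cleaning steps. Finally the grid is projected to the plane.

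If you want to keep your write-up, you must replace the appeal to Theorem~\ref{const} by this direct construction (or some other independent proof of \ref{const}); the duality paragraph alone does not constitute a proof.
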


\subsection{Chromatic number of cell-arrangements in the plane}

A simple arrangement  of a set $\cL$ of lines in $\RR^2$ is when there are 
 no parallel lines and no three lines
going through the same point. It decomposes the plane into a set $\cC$ of cells, i.e.~maximal
connected components of $\RR^2\setminus\bigcup_{\ell\in \cL} \ell$. In~\cite{hurtado}, the  hypergraph $\HH_{\text{line-cell}} = (\cL;\cC)$ was defined,
with the vertex set $\cL$, and with each hyperedge $F\in \cC$ given
by a set of lines forming the boundary of a cell of $\cL$. It was proved in~\cite{hurtado} that there is a family $\cL$ of $n$ lines such that the corresponding hypergraph $\HH_{\text{line-cell}}$ has chromatic number  $\Omega(\log n/\log\log n)$, and in~\cite{ackerman} that it is always $O(\sqrt{n/\log n})$.
 In~\cite{ackerman}, the following connection to the Erd\H os $(3,4)$-problem was pointed out. Suppose that we have $n$ points in a plane, with no four in a line, such that any $t$ of them contain three that are on a line. Then one can take the dual of the point system to obtain a family of $n$ lines such that no four of them intersect in a point, but any $t$ of them contain three that have a common point. A small perturbation of the lines changes every intersecting triplet of lines into a triangle. Thus, having a good construction for the  Erd\H os $(3,4)$-problem automatically gives a density version  of~\cite{hurtado}. To summarize, Theorem~\ref{3and4} instantly implies the following result.

\begin{theorem}\label{chrom}
There is a set of $n$ lines in the plane such  that any subset of it of size $ n^{5/6+o(1)} $ completely  contains a cell.
In particular, the cell-chromatic number of the system is at least $n^{1/6-o(1)}$.
\end{theorem}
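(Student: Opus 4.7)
The plan is to deduce Theorem~\ref{chrom} from Theorem~\ref{3and4} by point-line duality followed by a small generic perturbation, as sketched in the paragraph immediately preceding the statement. I would proceed in three short steps.

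First, apply Theorem~\ref{3and4} to obtain a set $S\subset\RR^2$ of $n$ points with no four collinear such that every $T\subseteq S$ with $|T|\ge n^{5/6+o(1)}$ contains a collinear triple. Dualize $S$ via a standard incidence-preserving point-line duality to obtain a family $\cL$ of $n$ lines in which no four are concurrent, yet every sub-family of size at least $n^{5/6+o(1)}$ contains three concurrent lines.

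Second, perturb $\cL$ generically: replace each $\ell\in\cL$ by a line $\tilde\ell$ obtained by translating $\ell$ by an independent random vector of magnitude at most $\eta$. For $\eta>0$ sufficiently small, the perturbed family $\tilde\cL$ is a simple arrangement (with probability one, no two are parallel and no three are concurrent), and for each formerly concurrent triple of $\cL$ the three corresponding perturbed lines bound a tiny triangle that is an entire cell of $\tilde\cL$. A valid $\eta$ exists because there are only finitely many (former concurrency point, non-incident line) pairs in $\cL$ to avoid, and one just chooses $\eta$ smaller than the minimum such distance.

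Third, take any $\tilde T\subseteq\tilde \cL$ with $|\tilde T|\ge n^{5/6+o(1)}$. Its preimage in $\cL$ has the same size, so by the construction of $\cL$ it contains three concurrent lines, and the corresponding triple in $\tilde T$ fully bounds a cell of $\tilde\cL$. This establishes the first assertion. For the second, in any proper coloring of the cell-hypergraph $\HH_{\text{line-cell}}$, no color class can have size at least $n^{5/6+o(1)}$, since such a class would completely contain a cell; hence at least $n/n^{5/6+o(1)} = n^{1/6-o(1)}$ colors are required. The only mildly delicate point is verifying that the small triangle produced by the perturbation really is an entire cell of $\tilde\cL$, and this is handled by the choice of $\eta$ above; beyond this, the argument is a direct reduction and I do not anticipate any serious obstacle.
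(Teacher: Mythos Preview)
Your proposal is correct and follows exactly the approach the paper takes: the paper does not give a separate proof but simply records that Theorem~\ref{3and4} ``instantly implies'' Theorem~\ref{chrom} via point--line duality and a small perturbation turning each concurrent triple into a triangular cell, which is precisely your three-step reduction. One tiny technical slip worth fixing: translating a line does not change its slope, so a pure translation perturbation cannot eliminate parallel pairs; either arrange in advance that the dual lines have pairwise distinct slopes (e.g., by a generic rotation of $S$ before dualizing) or allow the perturbation to tilt the lines slightly as well.
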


 \section{The hypergraph container theorem}\label{subsec-container}

Now let us present our main tool, the hypergraph container theorem.
Let $\cH$ be an $r$-uniform hypergraph with average degree $d$. For every $S\subseteq V(\cH)$, its co-degree, denoted by $d(S)$, is the number of edges in $\cH$ containing $S$, i.e.,~
$$d(S)=|\{e\in E(\cH): S\subseteq e\}|.$$ 
For every $j\in[r]$, denote by $\De_j$ the $j$-th maximum co-degree, i.e.,~
$$\De_j=\max\{d(S): S\subseteq V(\cH), |S|=j\}.$$
For  $\tau\in (0,1)$, define 
$$\De(\cH,\tau)=2^{\binom{r}{2}-1} \sum_{j=2}^r \frac{\De_j}{d\tau^{j-1} 2^{\binom{j-1}{2}}} .$$
In particular,  when $r=3$
$$
\De(\cH,\tau)=\frac{4\De_2}{d\tau} + 
\frac{2\De_3}{d\tau^{2}}. $$ 

We are going to use the following version of the hypergraph container theorem (Corollary~3.6 in~\cite{ST}).

\begin{theorem}\label{thm-container}
Let $\cH$ be an $r$-uniform hypergraph with vertex set $[N]$. Let $0<\ep,\tau<1/2$. Suppose that $\tau<1/(200\cdot r \cdot r!^2)$ and $\De(\cH,\tau)\le \ep/(12r!)$. Then there exists $c=c(r)\le 1000\cdot r\cdot r!^3$ and a collection $\cC$ of vertex subsets such that

(i) every independent set in $\cH$ is a subset of some $A\in \cC$;

(ii) for every $A\in\cC$, $e(\cH[A])\le\ep \cdot e(\cH)$;

(iii) $\log|\cC|\le cN\tau\cdot \log(1/\ep)\cdot \log(1/\tau)$.
\end{theorem}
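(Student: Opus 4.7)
The plan is to follow the approach of Saxton and Thomason and prove the theorem via an explicit container-generating algorithm (sometimes called a \emph{scythe}) that deterministically maps each independent set $I\subseteq[N]$ to a small \emph{fingerprint} $S(I)\subseteq I$ and a \emph{container} $A(I)\supseteq I$ depending only on $S(I)$ together with a bounded amount of side information. Once this is in place, (i) is automatic; (iii) follows by counting pairs (fingerprint, side info), which is at most $\binom{N}{\le\tau N}\cdot 2^{O(\tau N\log(1/\ep))}$, giving $\log|\cC|=O(N\tau\log(1/\tau)\log(1/\ep))$ with the stated constant $c(r)$.

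The scythe runs as follows. Fix a linear ordering of $V(\cH)$ by decreasing degree, initialise $S=\emptyset$ and an ``alive'' set $U=[N]$, and repeat: pick the first $v\in I\cap U$ whose degree in $\cH[U]$ is still above a current threshold; append $v$ to $S$; then for each $2\le j\le r$ and each $(j-1)$-subset $S'\subseteq S$ containing $v$, remove from $U$ every vertex $u\notin S$ for which $d(S'\cup\{u\})$ exceeds an appropriate threshold. Stop when no such $v$ exists and set $A=U\cup S$. Because $I$ is independent, no vertex removed from $U$ can lie in $I$, so $I\subseteq A$; because the deletions are driven only by $S$ and the side information, $A$ is a function of that pair alone.

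The analysis is a potential-function argument showing that at each step the number of surviving edges $e(\cH[U])$ drops by a multiplicative factor, while $|S|$ grows by one. The rate of decay is controlled, level by level, by the quantities $\De_j/(d\tau^{j-1}2^{\binom{j-1}{2}})$; summing over $j=2,\ldots,r$ and invoking the hypothesis $\De(\cH,\tau)\le\ep/(12r!)$ yields both that the scythe terminates after at most $\tau N$ steps and that at termination $e(\cH[A])\le\ep\cdot e(\cH)$, which is (ii). The extra $\log(1/\ep)$ factor in (iii) comes from the side information needed to record, at each of the at most $\tau N$ steps, which of $O(\log(1/\ep))$ discrete degree-thresholds was the one activated.

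The main obstacle is showing that the scythe can always make progress while simultaneously respecting the co-degree deletions at all $r-1$ levels. The right invariant to carry is that the link-hypergraphs of $S$ of all orders remain sufficiently ``uniform'', so that an averaging argument over $I\cap U$ always exhibits a vertex of large enough degree to be appended to $S$; formalising this invariant and controlling its decay under the level-$j$ deletions is where the factors $12r!$ in the hypothesis and $r!^3$ in $c(r)$ are absorbed. For the application in the present paper only $r=3$ is needed, where only $\De_2$ and $\De_3$ enter and the induction reduces to two clean cases.
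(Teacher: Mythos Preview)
The paper does not give its own proof of this statement: Theorem~\ref{thm-container} is quoted directly as Corollary~3.6 of Saxton--Thomason~\cite{ST} and used as a black box throughout. There is therefore no proof in the paper to compare your proposal against.

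That said, your outline is recognisably the Saxton--Thomason scythe, and the broad architecture---build a small fingerprint $S\subseteq I$, prune a live set $U$ deterministically from $S$, output $A=U\cup S$, and count containers by counting fingerprints plus bounded side information---is the correct one. A couple of details are off, however. The sentence ``Because $I$ is independent, no vertex removed from $U$ can lie in $I$'' is not a valid justification for the co-degree deletions as you describe them: a vertex $u$ with large $d(S'\cup\{u\})$ may perfectly well lie in $I$. What actually keeps $I$ inside $A$ is the degree-ordering rule---at each step one removes from $U$ only vertices preceding the chosen $v$ in the current degree order, and $v$ was selected as the first vertex of $I\cap U$ in that order. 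Independence of $I$ enters elsewhere, to guarantee that the process makes progress through the link hypergraphs. Your closing remark that ``only $r=3$ is needed'' is also incorrect for this paper: Sections~\ref{epsnetsection} and~\ref{weak} and the proof of Theorem~\ref{dconst} all apply Theorem~\ref{thm-container} with $r$ growing (slowly) with $n$, so the full $r$-uniform statement is genuinely required.
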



\section{Supersaturation of collinear point sets in the grid.}

Given  integers $n,k,r\ge 3$, we consider the following $r$-uniform hypergraph $\cH=\cH(n,k,r)$ encoding the set of all collinear $r$-tuples in $[n]^k=\{1,2,\ldots,n\}^k$. We let  $V(\cH)=[n]^k$ and we take the edge set  of $\cH$ to consist of all collinear $r$-tuples. 

While it is easy to compute the number of vertices of $\cH$, $|V(\cH)|=n^k=:N$, it takes more effort to estimate the number of edges. For the bounds below, we shall assume that $r,k\le 0.01\cdot \log n$.

\begin{claim}\label{edgecounting}
(i) If $r\le k$ then 
\begin{equation}
\frac{n^{2k}}{r^{2k}}\le \ e(\cH) \le \frac{k\cdot  2^{r+k} }{r!} n^{2k}.
\end{equation}
(ii) If $r =  k+1$ then 
\begin{equation}
\frac{n^{2k}}{r^{2k}} \le \ e(\cH) \le \frac{k \cdot 2^{r+k} }{r!} \cdot n^{2k}\log n.
\end{equation}
(iii) If $ k+1< r\le 2k$ then 
\begin{equation}
  k\cdot \binom{n}{r} \cdot n^{k-1}  \le  \ e(\cH) \le \frac{k\cdot  2^{r+k+1} }{r!} \cdot n^{r+k-1}.
\end{equation}
\end{claim}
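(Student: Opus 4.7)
The plan is to count the collinear $r$-tuples in $[n]^k$ by grouping them according to the primitive integer direction $v\in\ZZ^k$ (normalized so that its first nonzero entry is positive) of the line on which they lie. Two geometric facts drive the count. First, the lines with a given primitive direction $v$ partition $[n]^k$, so writing $L_v$ for the set of such lines meeting $[n]^k$ and $m(\ell)=|\ell\cap[n]^k|$, one has $\sum_{\ell\in L_v}m(\ell)=n^k$. Second, if $M=\|v\|_\infty$, then $m(\ell)\le \lceil n/M\rceil\le 2n/M$, since stepping along $\ell$ changes the coordinate attaining $M$ by exactly $\pm M$.

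For the lower bounds in (i) and (ii), I would count arithmetic progressions: every nonzero $v\in\ZZ^k$ with $\|v\|_\infty\le n/r$ together with a starting point $p$ satisfying $p+(r-1)v\in[n]^k$ yields a collinear $r$-set $\{p+iv:0\le i\le r-1\}$. There are at least $(n/r)^k$ admissible $v$ and, for each, at least $\prod_i(n-(r-1)|v_i|)\ge (n/r)^k$ valid starting points; dividing by $2$ to identify $v$ with $-v$ gives $\ge n^{2k}/(2r^{2k})$ unordered $r$-APs, and the hypothesis $r,k\ge 3$ absorbs the remaining factor. For (iii), the axis-parallel lines alone already contribute $k\cdot n^{k-1}\cdot\binom{n}{r}$ collinear $r$-tuples, matching the claim.

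For the upper bounds I would combine the above with $\binom{m}{r}\le m^r/r!$ and the elementary $\sum_\ell m(\ell)^r\le (\max_\ell m(\ell))^{r-1}\sum_\ell m(\ell)$ to obtain, for each primitive $v$ with $\|v\|_\infty=M$,
$$
\sum_{\ell\in L_v}\binom{m(\ell)}{r}\ \le\ \frac{1}{r!}\Bigl(\frac{2n}{M}\Bigr)^{\!r-1}n^k.
$$
Since the number of primitive $v$ with $\|v\|_\infty=M$ is at most $k(2M+1)^{k-1}$ (choose which coordinate achieves $\pm M$ and fill the other $k-1$ coordinates freely in $\{-M,\ldots,M\}$), summing over directions gives
$$
e(\cH)\ \le\ \frac{k\,2^{r-1}n^{k+r-1}}{r!}\sum_{M=1}^{n}\frac{(2M+1)^{k-1}}{M^{r-1}}.
$$
The residual sum is $\Theta(n^{k-r+1})$ for (i) (when $r\le k$), the harmonic sum $\Theta(\log n)$ for (ii) ($r=k+1$), and a convergent series $O(1)$ for (iii) ($r\ge k+2$), yielding upper bounds of the orders $n^{2k}$, $n^{2k}\log n$, and $n^{r+k-1}$, respectively.

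The hard part will be purely arithmetic: matching the explicit constants $k\cdot 2^{r+k}/r!$ and $k\cdot 2^{r+k+1}/r!$ demands a tighter handling of $(2M+1)^{k-1}$ than the crude $3^{k-1}M^{k-1}$---one natural route is to isolate the $M=1$ contribution and use $(2M+1)\le 2(M+1)$ for $M\ge 2$---together with a careful case-by-case evaluation of the residual sum in each of the three regimes. The structural decomposition by direction is where the geometry enters; the rest is bookkeeping.
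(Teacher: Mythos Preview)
Your lower bounds are essentially the paper's (it also takes $u,v\in[n/r]^k$ for (i)--(ii) and axis-parallel lines for (iii)). For the upper bounds, however, you take a genuinely different route. The paper bins lines dyadically by the number of grid points they carry: writing $L(t)$ for the number of lines with between $2^t$ and $2^{t+1}$ points, it bounds $L(t)$ by crudely counting pairs $(u,v)$ with each $|v_i|\le n/2^t$ and one coordinate of $u$ small, arriving at a single sum $\sum_t \frac{k\,2^{r+k}}{r!}n^{2k}2^{t(r-k-1)}$ whose behaviour in the three regimes is read off directly. Your decomposition by $M=\|v\|_\infty$ is conceptually cleaner---the partition identity $\sum_\ell m(\ell)=n^k$ together with the moment inequality $\sum m(\ell)^r\le(\max m(\ell))^{r-1}\sum m(\ell)$ replaces the paper's ad hoc count of starting points---and it makes transparent that small-$M$ directions dominate in regime (iii) while large-$M$ ones dominate in (i). Both approaches give the same orders.

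One correction on the constant-matching in (iii): the looseness that actually costs you the stated constant is not the direction count but the line-length bound. At $M=1$ you have $m(\ell)\le\lceil n/M\rceil=n$, whereas your blanket estimate $m(\ell)\le 2n/M$ gives $2n$, inflating that term by a factor $2^{r-1}$. This alone makes the $M=1$ contribution $\frac{k\,2^{r-1}3^{k-1}}{r!}\,n^{r+k-1}$, which exceeds the target $\frac{k\,2^{r+k+1}}{r!}\,n^{r+k-1}$ as soon as $(3/2)^{k-1}>4$, i.e.\ $k\ge 5$. If instead you use the sharp $m(\ell)\le n$ at $M=1$, that term becomes at most $\frac{3^k}{2\,r!}\,n^{r+k-1}$, and since $r\ge k+2$ one has $3^k\le 4^k=2^{2k}\le 2^{r+k}$, so it fits under the target; the tail $M\ge 2$ is then easily absorbed. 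The fix you propose---tightening $(2M+1)^{k-1}$ via $(2M+1)\le 2(M+1)$---addresses the wrong factor and does not close the gap by itself.
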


\begin{proof} For the lower bounds, we show the existence of many collinear $r$-tuples.
A collinear $r$-tuple can be written in a form $u,u+v,\ldots, u+(r-1)v$ for some $u,v\in \ZZ^k$. In order  to keep the points in $[n]^d$,   we shall choose $u,v\in [n/r]^k$, which proves (i) and (ii). For (iii),  
when $r>k$, we can do better, as    in any of the $k$ (axis-parallel)  directions we have $n^{k-1}$ parallel lines, each containing $n$ points, i.e.~$\binom{n}{r}$ collinear $r$-tuples.

For the upper bounds we need a bit more care. Write $L(t)$ for the number of lines containing more than  $2^t$ and at most $2^{t+1}$  points.
Then we want to bound

\begin{equation}\label{pointlower}
\sum_{t=1}^{\log n} L(t)\cdot \binom{2^{t+1}}{r}.
\end{equation}

If a line contains exactly $\ell$ points, where $2^t <  \ell \le 2^{t+1}$,  then  we can list these $\ell$ points  as  $u,u+v,\ldots, u+(\ell-1)v$ for some $u,v\in \ZZ^k$. Observe that because these points are in $[n]^k$, the absolute value of each coordinate of $v$ is at most $n/(\ell-1)$. By symmetry, and for the price of a factor of $2^k$, we can assume that each coordinate of $v$ is non-negative. This implies that at least one coordinate of $u$ is at most $n/(\ell-1)$, otherwise $u-v$ was on the line as well.
Using that   $2^t\le \ell < 2^{t+1}$ this means that   at least one coordinate of $u$ is at most $n/2^t$,
 and  each coordinate of $v$ is at most $n/2^t$, meaning 
    that the number of choices for $v$ is at most $(n/2^{t})^k$, and for $u$ is at most 
$k \cdot n^k/2^t$, therefore $L(t)\le 2^k\cdot (n/2^{t})^k \cdot k \cdot n^k/2^t$.
Writing this into \eqref{pointlower} we obtain the following upper bound on the number of collinear $r$-tuples:

\begin{equation}\label{pointlowe2}
\sum_{t=1}^{\log n} 2^k\cdot (n/2^{t})^k \cdot k \cdot n^k/2^t \cdot \binom{2^{t+1}}{r}\le 
\sum_{t=1}^{\log n} \frac{k \cdot 2^{r+k} }{r!} n^{2k}2^{t(r-k-1)}.
\end{equation}


When $r\le k$ then
$\sum_t 2^{t(r-k-1)}\le 1.$ 
  When $r=k+1$, then $\sum_t 2^{t(r-k-1)}\le \log n.$  If $r>k+1$    then $\sum_t 2^{t(r-k-1)}\le 
2\cdot n^{r-k-1}$.
\end{proof}

A key ingredient is the supersaturation lemma. First we consider the $k=r=3$ case.

\begin{lemma}\label{supsatkr3}
 In $\cH(n,3,3)$ every set of vertices of size $n^{3-s}$ spans at least $ \frac{ n^{6-4s}}{ 3\cdot 10^{9}\log n}$ hyperedges, where $0\le s<1 $ and $n$ is sufficiently large.
\end{lemma}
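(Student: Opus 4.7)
The plan is to count collinear triples in $A$ by decomposing according to the primitive direction of the containing line. For each primitive vector $v \in \ZZ^3 \setminus \{0\}$ (identified with $-v$), the parallel lines in direction $v$ that meet $[n]^3$ partition the grid. Writing $L_v$ for the number of such lines and $a_{v,\ell} := |A \cap \ell|$, one has $\sum_\ell a_{v,\ell} = m$ where $m := n^{3-s}$. Every collinear triple in $A$ lies on a unique line and hence corresponds to a unique primitive direction, so the total number of collinear triples equals $\sum_v T_v$, where $T_v := \sum_\ell \binom{a_{v,\ell}}{3}$.

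The first step will be to estimate $L_v$ and apply Jensen. Standard lattice counts show that if $|v|_\infty = r$, then each line in direction $v$ has length at most $n/r + 1$, and hence $L_v \le C n^2 r$ for an absolute constant $C$. Since $\binom{x}{3}$ is convex on $[1,\infty)$, Jensen's inequality applied to the restriction to lines hitting $A$ gives $T_v \ge L_v \binom{m/L_v}{3}$. Provided the average $m/L_v \ge 6$, this simplifies to $T_v \ge m^3/(50 L_v^2)$. Choosing $R := c n^{1-s}$ for a small absolute constant $c$ guarantees that the Jensen hypothesis holds for every primitive $v$ with $|v|_\infty \le R$.

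The second step will be to count directions and sum. The number of primitive $v$ (up to sign) with $|v|_\infty = r$ is $\Theta(r^2)$, so summing the lower bound over this shell yields a contribution of order $r^2 \cdot m^3/(n^2 r)^2 = m^3/n^4$, independent of $r$. Finally, summing over $r = 1, \ldots, R$ gives a total of $\Theta(R \cdot m^3 / n^4) = \Theta(n^{1-s} \cdot n^{9-3s}/n^4) = \Theta(n^{6-4s})$, which matches the target; the $\log n$ factor and the large constant $3 \cdot 10^9$ in the denominator of the statement provide ample slack for the various absolute constants in the bounds above.

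The main obstacle will be the careful bookkeeping needed to turn these asymptotic estimates into explicit constants. In particular, one must handle boundary effects (lines near the edge of $[n]^3$ are shorter than $n/r$, which inflates $L_v$), obtain a clean lower bound on the number of primitive vectors in each shell $|v|_\infty = r$, and apply Jensen's inequality only after restricting to non-empty lines so as to avoid the negative values of the cubic extension $x(x-1)(x-2)/6$ on $(1,2)$. Once these technicalities are handled, the $\log n$ factor is absorbed with room to spare.
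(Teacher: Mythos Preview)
Your approach is correct and in fact yields a slightly stronger bound than stated (you obtain $\Theta(n^{6-4s})$ with no $\log n$ loss). It is genuinely different from the paper's argument. The paper also uses a convexity/Jensen argument over a family of lines, but it works at a \emph{single scale}: it fixes $t=2000n^s$ and takes only those lines whose direction vector $(a,b,c)$ has $n/t\le a\le 2n/t$ with $a$ prime (and $|b|,|c|\le a$), together with a suitable set of starting points. The primality restriction (used to make the direction primitive and the incidence count clean) is precisely what introduces the $\log n$ factor via the prime number theorem; after that, one Jensen step on this single family of $\Theta(n^6/(t^4\log n))$ lines finishes the proof.

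Your argument instead ranges over \emph{all} primitive directions with $|v|_\infty\le R\approx n^{1-s}$, applies Jensen separately to each direction, and then sums over the shells $|v|_\infty=r$. Because the number of primitive directions in the $r$-th shell is $\Theta(r^2)$ while $L_v^2=\Theta(n^4r^2)$, each shell contributes the same order $m^3/n^4$, and summing over $R$ shells recovers the extra factor $n^{1-s}$ without any logarithmic loss. What your approach buys is a cleaner bound and the avoidance of the prime-counting detour; what the paper's approach buys is a slightly simpler single-family setup (one Jensen application rather than a sum over scales) that generalizes in a very uniform way to the $k$-dimensional supersaturation lemma used later in the paper.

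Your caveats about Jensen are handled correctly: restricting to lines meeting $A$ puts all $a_{v,\ell}\ge 1$ so that convexity of $\binom{x}{3}$ on $[1,\infty)$ applies, and since $m/L_v\ge 6$ the resulting bound $L'_v\binom{m/L'_v}{3}$ only improves upon $L_v\binom{m/L_v}{3}$ (the function $L\mapsto L\binom{m/L}{3}$ is decreasing for $L\le m/2$). The upper bound $L_v\le 3n^2r$ follows from the ``first point on the line'' count, and the lower bound on primitive directions per shell follows from the standard Jordan-totient estimate $J_2(r)\ge r^2/\zeta(2)$.
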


\begin{proof} Let  $ S$ be an arbitrary subset of vertices of size $|S|=n^{3-s},$
where  $0\le s<1$. Let $ t = 2000n^s$ and 
$$U:=\{(a,b,c): \ 1\le a\le n/t,\quad  -n\le b,c\le n\}, $$
$$\quad V=\{(a,b,c):\ a\ge |b|, |c|,\quad n/t\le a\le 2n/t, \quad a \text{ is a prime}\}.$$

We define the collection of  lines $\cL=\cL(t)$, whose starting points are in $U$, and their directions are in $V$ (note that we define these lines in $\RR^3$, and we are interested in their intersection with $[n]^3$). 
The line system has several properties.

\begin{claim}\label{inci3}
(i)   Every line in $\cL$ contains at most $t$  points from $[n]^3$.\\
(ii)  $|\cL |\le 300n^6/(t^4\cdot \log (n/t)).$\\
(iii) Every point in $[n]^3$   is  contained  in at least  $n^3/( t^3\log (n/t))$ and at most $50n^3/(t^3\cdot \log (n/t))$  lines from $\cL$. 
\end{claim}

\noindent {\it Proof of claim.} Part (i) follows, as the absolute value of the first coordinate of a direction is at least $n/ t$. Part (ii) follows from considering all choices from  $U$ and $V$, and that the number of prime numbers smaller than $x$ is at least $0.9x/\log x$ and at most $1.1x/\log x$ for $x$ sufficiently large.

For the lower bound in (iii); observe that each $(a,b,c)\in V$ with $b,c\ge 0$, the collection of lines with slope $(a,b,c)$ and starting point from $U$ cover $[n]^3$, hence each point of  $[n]^3$ is covered as many lines as the number of such triplets. Clearly, $|V|$ is an upper bound. This proves the claim.

The number of point-line incidences of the points from $S$ and lines from $\cL$ is, by Claim~\ref{inci3}~(iii), at least $  n^{3-s} \cdot n^3/(  t^3 \log (n/t))$, so 
the average number of points per line is at least  $t\cdot n^{-s}/300>6$. Therefore, the number of collinear triplets in $S$ is at least

$$\binom{t\cdot n^{-s}/300}{3}\cdot\frac{ 300n^6}{t^4\cdot \log (n/t)}
\ge 
 \frac{n^{6-3s}}{1100000 t\log (n/t)}\ge  
 \frac{n^{6-4s}}{3\cdot 10^9\log (n/t)} .$$
\end{proof}


In the next supersaturation result on $\HH= \HH(n,k,r)$, we  have $r=k$,
 and we are interested in finding many edges in large subsets of vertices. The idea of the proof is similar to the one in Claim~\ref{inci3}, but the computation is somewhat more technical.
  Unfortunately, the result is useful only when $k<0.01\log^{1/2} n$, because of the $2^{3k^2}$ term. It
  might be useful to extend the proof to improve the exponent
    $1/2$ to $1-c$ for some small constant $c>0$.

\begin{lemma}\label{supsatkbr}
Let $ 0.1< \gamma \le 1$  and  $r=k<0.01\log^{1/2} n$ and $n$ be sufficiently large. Then  every set of vertices of size $\gamma n^{k}$ spans
 at least $$ \frac{\gamma^{k+1}\cdot n^{2k}}{k! \cdot 2^{3k^2} \cdot  10\cdot 2^{3k}\cdot k \cdot \log n} $$ hyperedges.
\end{lemma}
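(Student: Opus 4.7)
The plan is to imitate the proof of Lemma~\ref{supsatkr3}, generalizing it from the case $k=r=3$ to arbitrary dimension $k$ with $r=k$. Given $S\subseteq [n]^k$ with $|S|=\gamma n^k$, I will construct a family $\cL$ of lines in $\RR^k$, parametrized by prime slopes so that geometric multiplicities are $O(1)$, with the properties that each line of $\cL$ meets $[n]^k$ in at most $t$ points, $|\cL|$ is controlled from above, and each grid point lies on many lines of $\cL$. Applying Jensen's inequality (convexity of $\binom{\cdot}{k}$) to $\sum_{\ell\in\cL}\binom{|S\cap\ell|}{k}$ then yields the desired lower bound on the number of collinear $k$-tuples in $S$.

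Concretely, I would set $t:=2k\cdot 4^k/\gamma$ and define
\[ U=\{(a_1,\ldots,a_k)\in\ZZ^k:\ 1\le a_1\le 2n/t,\ |a_i|\le 2n\text{ for all } i\ge 2\}, \]
\[ V=\{(a_1,\ldots,a_k)\in\ZZ^k:\ a_1\text{ prime},\ n/t\le a_1\le 2n/t,\ |a_i|\le a_1\text{ for all } i\ge 2\}, \]
with $\cL$ being the collection of lines with a starting point in $U$ and direction in $V$, intersected with $[n]^k$. The analogs of Claim~\ref{inci3} that I would prove are: (i) every line of $\cL$ contains at most $t$ points of $[n]^k$, because the first coordinate advances by at least $n/t$ per step; (ii) by the prime number theorem, $|\cL|\le |U|\cdot|V|\le 8^k\, n^{2k}/(t^{k+1}\log(n/t))$; and (iii) every point of $[n]^k$ lies on at least $2^{k-1}\, n^k/(t^k\log(n/t))$ lines of $\cL$, obtained by restricting to directions in $V$ whose non-leading coordinates are non-negative (for such a direction, the line through a given point has a unique starting point in $U$). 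Primality of $a_1$ together with the narrow window $[1,2n/t]$ for the first coordinate of a start ensures that each geometric line appears with multiplicity $O(1)$ in $\cL$.

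From (iii), the $(S,\cL)$-incidence count is at least $\gamma\cdot 2^{k-1} n^{2k}/(t^k\log(n/t))$, so by (ii) the average number of points of $S$ on a line of $\cL$ is at least $\gamma t/4^k\ge 2k$, by the choice of $t$. Because $\gamma>0.1$ and $k<0.01\log^{1/2}n$, we have $t=n^{o(1)}$, so $\log(n/t)=(1-o(1))\log n$. Applying Jensen to $\binom{\cdot}{k}$, with (i) bounding the range and the $O(1)$ multiplicity converting the Jensen sum into distinct collinear $k$-tuples, I would deduce that the number of collinear $k$-tuples in $S$ is at least a universal constant times $|\cL|\cdot\binom{2k}{k}\ge |\cL|\cdot (2k)^k/k!$. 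Substituting $t$ and absorbing the $2^{O(k)}$ and $2^{O(k^2)}$ constants would give the claimed bound.

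The main obstacle is the careful tracking of the $2^{\Theta(k^2)}$ constants: already $t^{k+1}\sim (4^k/\gamma)^{k+1}$ contributes a factor of $2^{\Theta(k^2)}$, and one must combine this with the exponential-in-$k$ factors from $|U|$, from PNT in $|V|$, from the directionwise $2^{k-1}$, and from $\binom{2k}{k}$ in Jensen so that the final denominator is absorbed into $k!\cdot 2^{3k^2}\cdot 2^{3k}\cdot k$, leaving a clean $\gamma^{k+1} n^{2k}$ in the numerator. A subsidiary technical step is establishing the $O(1)$ multiplicity: primality of $v_1$ forces different $v,v'\in V$ to be non-parallel, and the window $[1,2n/t]$ on the first coordinate of $U$ allows at most a bounded number of integer shifts $u'-u$ along a fixed direction $v$ to land back in $U$.
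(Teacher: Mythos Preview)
Your approach is essentially the same as the paper's: the paper also defines $t=10\cdot 2^{2k}\cdot k/\gamma$, the same sets $U$ and $V$ (with $1\le a_1\le n/t$ in $U$), proves the analogous Claim~\ref{incir} with the bounds $|\cL|\le 2^{3k}n^{2k}/(t^{k+1}\log(n/t))$ and each grid point lying on at least $2^k n^k/(t^k\log(n/t))$ lines, and then applies convexity of $\binom{\cdot}{k}$ to conclude. So structurally you have reproduced the paper's argument.

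Two small arithmetic slips to be aware of. First, with your constants the ratio $I/|\cL|$ comes out to $\gamma t/2^{2k+1}$, not $\gamma t/4^k$; with $t=2k\cdot 4^k/\gamma$ this gives average only $\ge k$, for which $\binom{k}{k}=1$ and Jensen yields nothing useful. The paper's choice $t=10\cdot 2^{2k}k/\gamma$ (or any constant $>2$ in place of your $2$) fixes this. Second, the inequality $\binom{2k}{k}\ge (2k)^k/k!$ is false (Stirling gives $\binom{2k}{k}\sim 4^k/\sqrt{\pi k}$ while $(2k)^k/k!\sim (2e)^k/\sqrt{2\pi k}$); the clean way, which the paper uses, is to keep $\binom{2^{-2k}t\gamma}{k}\ge (2^{-2k-1}t\gamma)^k/k!$ and let the $t^k$ cancel against the $t^{k+1}$ in $|\cL|$, leaving exactly one factor of $t$ in the denominator and producing the $\gamma^{k+1}$ in the numerator upon substituting $t$.
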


\begin{proof} Let  $ S$ be an arbitrary subset of vertices of size $|S|=\gamma n^{k}$ and fix  $ t = 10\cdot 2^{2k}\cdot k/\gamma$. Let
$$U:=\{(a_1,\ldots,a_k): \ 1\le a_1\le n/t,\quad  -n\le a_2,\ldots, a_k\le n\}, $$
$$\quad V=\{(a_1,\ldots,a_k):\ |a_1|\ge |a_2|,\ldots, |a_k|,\quad n/t\le a_1 \le 2n/t, \quad a_1 \text{ is a prime}\}.$$

We define a collection of  lines $\cL=\cL(t)$ with starting points are in $U$ and their directions in $V$. 
The line system has several properties.

\begin{claim}\label{incir}
(i)  Every line in $\cL$ contains  at most $t$ points from $[n]^k$.\\
(ii)  $|\cL |\le 2^{3k}n^{2k}/(t^{k+1}\cdot \log (n/t)).$\\
(iii) The number of lines in $\cL$ containing a given point of $[n]^k$ is at least  $ 2^k\cdot n^k/( t^k\log (n/t))$ and at most 
$4^{k+1}n^k/(t^k\cdot \log (n/t))$. 
\end{claim}

\noindent \textit{Proof of claim.} Part (i) follows, as the absolute value of the first coordinate of a direction is at least $n/ t$. Part (ii) follows from considering all choices from  $U$ and $V$, i.e., 
$$|\cL |\le|U|\cdot |V|\le (2n+1)^{k-1}(n/t)(4n/t+1)^{k-1} (3n/t)/\log(n/t)\le \frac{2^{3k}\cdot n^{2k}}{ t^{k+1}\cdot \log(n/t)},$$ by the Prime Number Theorem for $n$ sufficiently large. 
For the lower bound in (iii); observe that each $(a,b,c)\in V$ with $b,c\ge 0$, the collection of lines with slope $(a,b,c)$ and starting point from $U$ cover $[n]^3$, hence each point of  $[n]^3$ is covered as many lines as the number of such triplets. Clearly, $|V|$ is an upper bound. This proves the claim.

The number of point-line incidences of the points from $S$ and lines from $\cL$ is, by Claim~\ref{incir} (iii), at least $  \gamma\cdot2^k\cdot  n^{2k} /(  t^k\log (n/t))$, so 
the average number of points per line is at least  $  \gamma\cdot2^k\cdot  n^{2k} /(  |\cL| \cdot t^k\log (n/t))\ge             2^{-2k}\cdot \gamma \cdot t $. Therefore, using that  $ t > 10\cdot 2^{2k}\cdot k/\gamma$,  the number of collinear $r$-tuples in $S$ is at least

$$  \binom{ \gamma\cdot2^k\cdot  n^{2k} /(  |\cL| \cdot t^k\log (n/t))}{r}  \cdot  |\cL| \ge     \binom{2^{-2k} t\cdot \gamma}{k}\cdot\frac{  2^{3k}n^{2k}}{t^{k+1}\cdot \log (n/t)}
\ge 
 \frac{\gamma^k\cdot n^{2k}}{k! \cdot 2^{3k^2} \cdot  t\cdot \log n}.$$
 \end{proof}

\section{The container method on $[n]^3$: Proof of Theorem~\ref{3and4}}

We apply the container lemma several times to the hypergraph  $\cH=\cH(n,3,3)$ and to its subhypergraphs. First we let $C^0:=[n]^3$, $\cC^0:=\{C^0\}$ and 
write $\cH^0=\cH[C^0]=\cH$. Applying the container lemma to $\cH^0$ will output  a container family $\cC^1$. For each $C^1_j \in \cC^1$ we consider the hypergraph $\cH^1_j:= \cH[ C^1_j]$  and  we apply the container lemma.  The collection of all container sets obtained from all $C^1_j \in \cC^1$ will form 
$\cC^2$.    We iterate this process, i.e.~we apply the container lemma for the hypergraphs spanned by the  containers and we continue it until each of the container sets is sufficiently small. We shall prove that the number of steps is $O(1)$, so that the total number of containers is still small.
Note that one could form a general container theorem for hypergraphs with a strong supersaturation property along these ideas, but it appears to be better to work on the particular hypergraphs separately. Similar multiphase applications of the container method have appeared for example in  \cite{BMT} and \cite{CM}.

    In the final step of the proof of the theorem, as is usual in the container method, we claim that none the container sets  contains too many elements of a random subset of the vertices, so that we can use a union bound.

\medskip

We fix an arbitrary small constant $f>0$, which will show up in the error term in the exponent next to $5/6$.
Also, the number of iterations will be $O(1/f)$.
For the generic iteration step $i$, assume that we work with $\cH^i_j\subset \cH$ spanned by a container set $C^i_j=V(\cH^i_j)$, where $i$ stands for the number of rounds and $j$ is just a running label over various hypergraphs. Define $0\le s=s(i,j)$ such that

$$v(\cH^i_j)=n^{3-s}.  $$ Note, that in the first step we have $s=0$.
If $s\ge 1/3-f$ then $C^i_j$ is sufficiently small, and we put it into the final container family $\cC$, and we shall not do anything with it.

\medskip

Now we apply the container lemma on $\cH^i_j=\cH[C^i_j]$:
By Lemma~\ref{supsatkr3} we have a lower bound on $e(\cH^i_j)$, and hence on the average degree of 
$\cH^i_j$ as well:

$$e(\cH^i_j)\ge  \frac{ n^{6-4s}}{ 3\cdot 10^{9}\log n} \quad \text{and} \quad d(\cH^i_j)\ge  \frac{3 n^{6-4s}}{ 3\cdot 10^{9}\log n\cdot n^{3-s}} =  \frac{ n^{3-3s}}{ 10^{9}\log n}. $$
Observe that the hypergraph $\cH^0$, and hence each subhypergraph of it  satisfies that
$$\Delta_2 < n,\quad \Delta_3= 1.$$
We set $$\tau=n^{s-4/3} \quad \text{and} \quad  \ep=n^{s-1/3+f/2}< n^{-f/2}. $$

To apply  Theorem~\ref{thm-container} we need the following condition to be satisfied (when $n$ is sufficiently large)

$$\De(\cH^i_j,\tau)=\frac{4\De_2}{d\tau} + 
\frac{2\De_3}{d\tau^{2}}\le \frac{4\cdot\ 10^9\cdot \log n}{n^{2/3-2s}}+ 
\frac{2\cdot\ 10^9\cdot \log n}{n^{1/3-s}}\le \frac{\ep}{72}. $$
By Theorem~\ref{thm-container}, there is a family $\cC^{i+1}_j$ of containers such that
$$|\cC^{i+1}_j|\le 2^{10^6\cdot n^{3-s} \tau\log^2 n} =  2^{10^6\cdot n^{5/3}\log^2 n},$$
and each $C\in \cC^{i+1}_j$ spans at most $\ep\cdot e(\cH^i_j)$ edges.

In the next round, for each $C\in \cC^{i+1}_j$ we run the same process (say $C$ is 
playing the role of $V(\cH^{i+1}_{j'})$). The key point of the iteration is that at  each iteration step, when $i$ is increasing to  $i+1$, then either $e(\cH^i_j)\le n^{14/3+f}$ and  $v(\cH^i_j)\le n^{8/3+f}$ or the number of edges is shrinking by a factor of at least  
$ n^{-f/2}$. Therefore the number of iteration steps is at most $12/f$, so the total number of containers will be at most 
$$ 2^{(12/f)10^7\cdot n^{5/3}\log^2 n}.$$
Here, the key point was that  when we are apply the container lemma to a hypergraph spanned by a container, the number of new containers increases just by a constant factor in the exponent, which accumulates only to a constant factor in the exponent, since the number of iterations is a constant.
The iteration stops, since we do not touch a container $C$ when $|C|\le n^{8/3+f}.$\\

To summarize: we have obtained a family of containers $\cC$ containing at most $ 2^{(12/f)10^7\cdot n^{5/3}\log^2 n}$
sets, each of size at most $n^{8/3+f}.$

\bigskip

Now we turn to the probabilistic construction of our point set in the plane.
 We take a $p$-random   subset of $[n]^3$, keeping each point with probability $p$, independently of the other points. Having this random point set, we remove a point from each collinear $4$-tuple. 
We want to choose a maximum value of $p$ such that most of the randomly chosen points should stay.
The expected number of random points is $pn^3$. By Claim~\ref{edgecounting}~(ii) the expected number of collinear $4$-tuples is 
 at most $100\cdot p^4\log n\cdot n^6$, hence for 
 $$p=\frac{1}{n\log^{1/2} n}$$ we have 
 $$pn^3\gg 100  p^4\log n\cdot n^6.$$
 Denote the resulting set by $S$. 
 
  By the container method above we proved that  there are at most $ 2^{(12/f)10^7\cdot n^{5/3}\log^2 n}$     containers (subsets of the vertex set), each set having size at most $n^{8/3+f}$, and each  independent set being contained in one of them. 
  
  We claim, using the first moment method, that $S$ is unlikely to contain an independent set of size $m$ when $m=n^{5/3+f}$:

 $$2^{(12/f)10^7\cdot n^{5/3}\log^2 n}  \cdot \binom{n^{8/3+f}}{m} p^m\le 2^{(12/f)10^7\cdot n^{5/3}\log^2 n} \cdot
 \left( \frac{e}{\log^{1/2}n}  \right)^m = o(1).$$
 
 Therefore, there is a set $S$ which does not contain an independent set of size $n^{5/3+f}$, does not contain $4$ points in a line, and has the property that
 $$\frac{ n^2}{2\log^{1/2}n} \le |S|\le \frac{2 n^2}{\log^{1/2}n}.$$

 The set $S$ can be projected into the plane in such a way that collinear point tuples stay on a line and no new collinear point tuples are created. Thus we have about $n^2/\log^{1/2}n$ points in the plane such that every subset of size $n^{5/3+f}$  contains three points on a line. As $f>0$ was an arbitrary small constant, this completes the proof of Theorem~\ref{3and4}.

\section{The container method on $[n]^r$: application to $\ep$-nets}\label{epsnetsection}

In the beginning of the proof we apply the   container method as in the previous section,
but to a different hypergraph.
  We complete the proof   using  ideas of Alon~\cite{Alon}. Note that here the application of the container method is somewhat simpler (though the supersaturation part is more technical), because  it is sufficient to obtain containers of size $0.1\cdot n^r$, i.e., a $1/10$th proportion of the vertex set of the hypergraph,     therefore it is sufficient   to apply the container lemma only once.

   Here,  we shall work with $\cH=\cH(n,r,r)$, where $r\le 0.01\log^{1/2} n/\log\log n$, but $r=r(n)\gg \log\log n,$ we assume that both $r$ and $n$ are sufficiently large for the computations to hold in this section: in particular, Lemma~\ref{supsatkbr} holds.
Using the fact that there are at most $n$ points in a line, we have 
$$
 \De_2= \binom{n-2}{r-2},    \quad \ldots ,  \quad \De_i= \binom{n-i}{r-i},  \quad\ldots, \quad  \De_r=1.
$$
Using Claim~\ref{edgecounting} (i) we can estimate the average degree of $\cH$:

$$\frac{n^{2r}}{r^{2r}} \le e(\cH)\le \frac{ 2^{2r}}{(r-1)!}n^{2r} \quad \text{and} \quad 
\frac{n^{r}}{r^{2r-1}} \le d=d(\cH)\le \frac{ r^2 \cdot 2^{2r} }{r!}n^{r}.$$
Also, by Claim~\ref{edgecounting} (ii), the number of collinear $(r+1)$-tuples is at most $\frac{(r+1)\cdot 2^{2r+1}}{r!}n^{2r}\cdot \log n.$

We need the following supersaturation statement, which is implied by Lemma~\ref{supsatkbr}.

 \begin{lemma}\label{sat}
 Let $T\subset V(\cH)$ be a set such that $|T| \ge 0.1 v(\cH)$. Then $$e(\cH[T])\ge  \frac{n^{2r}}{10^{r+2}2^{3r^2+3r}(r+1)!\cdot \log n} \ge 10^{-r^2}  e(\cH).$$
 \end{lemma}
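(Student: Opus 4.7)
The plan is straightforward: this lemma should follow essentially immediately from Lemma~\ref{supsatkbr}, since $\cH = \cH(n,r,r)$ has $k = r$, vertex set $[n]^r$, and $v(\cH) = n^r$. The hypothesis $|T| \geq 0.1\, v(\cH)$ translates into $|T| = \gamma n^r$ with $\gamma \geq 0.1$, and the range of $r$ assumed here ($\log\log n \ll r < 0.01 \log^{1/2} n /\log\log n$) is well within the range where Lemma~\ref{supsatkbr} applies. So the first step is simply to invoke that lemma with $k = r$ and $\gamma = 0.1$, obtaining
$$e(\cH[T]) \geq \frac{\gamma^{r+1} n^{2r}}{r!\cdot 2^{3r^2}\cdot 10\cdot 2^{3r}\cdot r\cdot \log n}.$$
Plugging in $\gamma = 0.1$ and using the trivial estimate $10\, r \cdot r! \leq 10\,(r+1)!$ to rewrite the denominator should yield the claimed bound $\frac{n^{2r}}{10^{r+2}\,2^{3r^2+3r}(r+1)!\log n}$ after collecting constants.

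For the second inequality $e(\cH[T]) \geq 10^{-r^2} e(\cH)$, I would combine the bound just obtained with the upper bound $e(\cH) \leq \frac{2^{2r}}{(r-1)!} n^{2r}$ from Claim~\ref{edgecounting}(i). After canceling the common $n^{2r}$, the inequality reduces to
$$10^{r^2 - r - 2} \;\geq\; 2^{3r^2 + 5r} \cdot r(r+1)\cdot \log n,$$
i.e.\ to showing that $(10/8)^{r^2} = (5/4)^{r^2}$ dominates the factor $\log n$ times low-order terms in $r$. Because we are in the regime $r \gg \log\log n$, we have $r^2 \gg (\log\log n)^2$, which is more than enough to make $(5/4)^{r^2} \gg \log n \cdot \mathrm{poly}(r)$. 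So this is a routine numerical check.

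I do not foresee any real obstacle here: the lemma is essentially a restatement of Lemma~\ref{supsatkbr} in the special case $\gamma = 0.1$, $k = r$, together with an elementary comparison against the upper bound on $e(\cH)$ from Claim~\ref{edgecounting}(i). The only mildly delicate point is keeping the constants straight when rewriting $10\, r \cdot r!$ as $10\,(r+1)!$ and separating the $\log n$ factor from the dominant $(5/4)^{r^2}$ growth, both of which are bookkeeping rather than substance.
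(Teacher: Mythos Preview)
Your proposal is correct and is exactly the approach the paper takes: the paper states only that Lemma~\ref{sat} is ``implied by Lemma~\ref{supsatkbr}'' without writing out any details, and you have correctly filled in that derivation together with the comparison against the upper bound $e(\cH)\le \frac{2^{2r}}{(r-1)!}n^{2r}$ from Claim~\ref{edgecounting}(i). The only nitpick is that Lemma~\ref{supsatkbr} is stated with the strict inequality $\gamma>0.1$, so to be completely clean you should either note that its proof works verbatim at $\gamma=0.1$, or observe that for $|T|>0.1\,v(\cH)$ you may take $\gamma$ marginally above $0.1$ and absorb the difference.
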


 Now we take a random subset $S$ of $v(\cH)$, choosing each vertex with probability $p$ independently of the other choices. We want the number of collinear $(r+1)$-tuples to be less than the number of vertices, so that removing one point from each collinear $(r+1)$-tuple results in a set that contains no   collinear $(r+1)$-tuple. Thus, we need

$$pn^r\gg p^{r+1}     \frac{(r+1) \cdot 2^{2r+1}}{r!} n^{2r}\log n,$$
observing that $\log^{1/r} n=1+o(1)$, it is satisfied when
\begin{equation}\label{compp}
p =  \frac{r}{20 n   }.
\end{equation}

We set $\ep=10^{-r^2}$, which by Lemma~\ref{sat} means that if a container spans
 fewer than $\ep\cdot e(\cH)$ edges, then it has at most $n^r/10$ vertices.

For the application of the container lemma,  we need to choose $\tau$ such that
\begin{equation}\label{taucon}
\tau<\frac{1}{200\cdot r \cdot r!^2}
\end{equation}
and
\begin{equation}\label{compdelta} \De(\cH,\tau)=2^{\binom{r}{2}-1} \sum_{j=2}^r \frac{\De_j}{d\tau^{j-1} 2^{\binom{j-1}{2}}} \le 
2^{\binom{r}{2}-1} \sum_{j=2}^r \frac{\binom{n-j}{r-j}\cdot r^{2r-1}}{n^r \tau^{j-1} 2^{\binom{j-1}{2}}} \le  \frac{\ep}{12r!} . \end{equation}

To satisfy the above conditions we  choose

$$ \tau= n^{-1- 0.9/(r-1)}.$$

Let us see why this is a good choice. The inequality \eqref{taucon} is easily satisfied, as $r \le \log^{1/2} n$. The bulk of \eqref{compdelta} is that 
$n^{-r}\tau^{1-j} \ll  10^{-r^2}$, which is the hardest to satisfy when $r=j$. Plugging our choice of $\tau$ simplifies it to $n^{0.1}\gg   10^{r^2}$, which is satisfied as $r\ll \log^{1/2} n.$

The Container Theorem, Theorem~\ref{thm-container}, gives the existence of at most  
$$2^{1000\cdot r\cdot r!^3 v(\cH)\tau\cdot \log(1/\ep)\cdot \log(1/\tau)}\le  2^{ n^{-1- 0.8/r} \cdot v(\cH)}$$
containers, each of size at most
$$ 0.1\cdot v(\cH),$$
such that  each independent set is contained in one of them.
Note that the above inequality is satisfied as $n^{0.1/r}\gg r^r\cdot \log n$ for our choice of $r$. This is  where the upper bound for $r$ comes from, though we used in many other places the fact that $r \le \log^{1/2} n$.

Now we can give an upper bound on the number of independent sets in a $p$-random subset of $\cH$ of size $m=0.45 pv(\cH)$, by simply multiplying  the number of containers with the number of choices of an $m$-set from a container:

$$
2^{ n^{-1- 0.8/r} \cdot v(\cH)}
\cdot  \binom{  0.1  \cdot v(\cH)}{m}\cdot p^m\le  2^{ n^{-1- 0.8/r} \cdot v(\cH)} (0.61)^m<
2^{( n^{-1- 0.8/r} -p/5)\cdot v(\cH)} =o(1).$$

Now we can put together the proof.
Consider a  $p$-random set of $v(\cH)$. By the choice of $p$, the 
 removal of $o(pv(\cH))$ points can make it to have the property that no $r+1$ points are in a line.
Project the set into the plane, keeping the collinearity of the points (and more importantly not creating new collinear point sets),  and denote the resulting set by $S$.
Note that we had that with high probability $S$ does not contain a subset of size $|S|/2$ that 
 contains no $r$ points in a line.
Now we choose carefully the parameters:
Let $\ep^*$ be a small positive constant, and set
 
 $$  r:= \log^{1/3}   \left(\frac {1}{\ep^*}\right)\log\log \left(\frac {1}{\ep^*}\right)^{-1} \quad\text { and }\quad  |S|= \frac{r}{\ep^*}.$$
   
   One can compute  $n$, using \eqref{compp} and  $|S|=p\cdot n^r$, but we do not need explicit formula for $n$. Its existence  is sufficient for us  for every small  $\ep^*>0$. It may be useful to state that
   $$\ep^*=2^{-(1+o(1))r^3}, \quad n= 2^{(1+o(1))r^2},\quad p=  2^{-(1+o(1))r^2}.$$

   If $T\subset S$ is an  $\ep^*$-net, then it has to intersect every line containing $r$ points,
which means that $S-T$ is an independent set. Note that this is the part of the proof where we use the fact that there is no line containing $r+1$ points. There is no independent set of size $|S|/2$ in $S$, so  $|S|-|T|<|S|/2$, which means that 
$$|T|\ge \frac{|S|}{2}\ge \frac {1}{2\ep^*} \log^{1/3}\left(\frac {1}{\ep^*}\right)\log\log \left(\frac {1}{\ep^*}\right)^{-1}.$$
\hfill Q.E.D.

\section{Weak $\ep$-nets}\label{weak}
 We also have a construction to bound the size of weak nets that works in the projective plane as well. The projection of $[k]^n$ has a very small weak $\ep$-net if one can use points in infinity, i.e., if we consider the problem in the projective plane. There are $2^n-1$ different directions for the lines, so $2^n-1$ points can cover all $k$-rich lines. In our construction after the tilting and projection  the rich lines are in general position.

Here we can prove a similar bound to that for the non-weak case, but it is weaker. Our main trouble is that the points are in too many lines, and there are far too many eliminated points to handle.
We use a similar approach, but we consider a much sparser hypergraph. Denote by $\cF=\cF(n,k,r)$ the following hypergraph. Its vertex set is $[k]^n$, and an $r$-tuple of vertices forms a hyperedge if the vertices are on an axis-parallel line: that is, all but one coordinate of their points are the same.

As before, we shall project it to the plane.
We use a map that keeps any three or more points collinear if they were on an axis-parallel line, and does not create any new collinear $3$-tuples in the process. Here we give a technical attempt to describe this map, which can be skipped by the reader. To avoid non axis-parallel triples being collinear we use a generic Cartesian product. Let  $H_i=\{a^i_1, a^i_2, \ldots , a^i_k\}$ where all $kn$ entries, $a^i_j$-s, are algebraically independent, distinct, non-zero real numbers. Now let $V(\cH)$ denote the Cartesian product $H_1\times H_2\times \ldots \times H_n.$ In this ``grid'' only axis-parallel triples are collinear. For the weak $\ep$-nets application in the projective plane this construction is not applicable yet. The problem is that all axis-parallel lines are incident to only a few, $n,$ points in the infinity. The construction below is described in the real space, $\mathbb{R}^n,$ but it has a natural embedding into the projective space, $\mathbb{PR}^n.$  We will apply two maps to avoid lines being parallel. First we show the construction in $\mathbb{R}^3.$ This ``un-parallelization'' technique was used by Koll\'ar in \cite{Koll}.

Let us define a map, $\nu_3: {\mathbb{R}}^3 \rightarrow {\mathbb{R}}^{7},$ such that 

$$\nu_3: (x_1,x_2, x_3)\longmapsto (x_1, x_2,  x_3, x_1\cdot x_2,  x_1\cdot x_3, x_2\cdot x_3, x_1\cdot x_2\cdot x_3).$$

A parametric equation of a line that is parallel to to the $x_1$ axis, say, is $[t+a,b,c],$ and its image is
$[a, b, c, ba, ca, cd, bca]+t[1,0,0,b, c, 0, bc].$  As the map is injective, non-crossing lines remain non-crossing. Let us check two parallel lines, $\ell_i$ and $\ell_j.$ After the map the two direction vectors are 
$[1,0,0,b_i, c_i, 0, b_ic_i]$ and $[1,0,0,b_j, c_j, 0, b_jc_j].$ If the vectors are parallel then there is a non-zero multiplier $\alpha\in \mathbb{R},$ such that  
$[b_i, c_i, b_ic_i]=\alpha[b_j, c_j, b_jc_j].$ Then $b_ic_j=b_jc_i$ which would contradict to the algebraic independence of the elements of $H_i.$

For the general case let us apply a map, $\nu_n: {\mathbb{R}}^n \rightarrow {\mathbb{R}}^{2^n-1},$ where a point is mapped to all possible products of its coordinates. $$\nu: (x_1,x_2,\ldots, x_n)\longmapsto (x_1,x_2,\ldots, x_n, x_1\cdot x_2,\ldots, x_{n-1}\cdot x_n,\ldots, x_1\cdot x_2\cdot \ldots \cdot x_n).$$

Only the axis parallel collinear triples will stay collinear in the image of $V(\cH)$ in  ${\mathbb{R}}^{2^n-1}$ under the map $\nu_d.$ Now we can project the image back generically to ${\mathbb{R}}^n.$ Other maps, like Veronese embeddings, would be sufficient too.

\medskip

The important parameters of $\cF$ are the following:
      
      $$v(\cF)=k^n,\quad e(\cF)=n\binom{k}{r}k^{n-1},\quad d(\cF)=\frac{rn}{k}\binom{k}{r},\quad \Delta_i=\binom{k-i}{r-i}.$$
    
    In the proof below we chose the parameters
    
    $$k=2^{r^4},\quad n=k^r,\quad  r=(\log n)^{1/5}=(\log n)^{1/4}, \quad  r=t^2, \quad  t=\sqrt r = (\log n)^{1/10}.$$ 
    
    These choices are not optimal. In particular we chose the exponent $1/5$ for the purposes of clarity rather than pushing  the proof to the best possible result. We emphasize that in this section if we say that a set of points of $[k]^n$ is collinear, then we mean that the points are in an axis-parallel line of $[k]^n$.

   We need the following supersaturation lemma.
   
      \begin{lemma}\label{supsatweak}
      Let $\gamma>10r/k$ and $T\subset V(\cF)$ with $|T|>\gamma v(\cF).$ Then
      $$e(\cF[T])\ge (\gamma- r/k)^r e(\cF).$$
         \end{lemma}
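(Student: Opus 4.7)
The plan is to exploit the rigid structure of $\cF$: every hyperedge sits inside one of the $nk^{n-1}$ axis-parallel lines of $[k]^n$, so the problem reduces to a one-dimensional convexity estimate on how the points of $T$ distribute among these lines.

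First I would enumerate the axis-parallel lines of $[k]^n$: there are $nk^{n-1}$ of them (one per choice of axis and of values for the remaining coordinates), and every vertex of $\cF$ lies on exactly $n$ of them. Setting $a_L := |L \cap T|$ for each such line $L$, double counting gives $\sum_L a_L = n|T| \ge \gamma n k^n$, so the average $\bar a = \sum_L a_L/(nk^{n-1})$ is at least $\gamma k$. Each edge of $\cF[T]$ is an $r$-subset of $T$ contained in a unique axis-parallel line, hence $e(\cF[T]) = \sum_L \binom{a_L}{r}$. The polynomial $\binom{x}{r} = x(x-1)\cdots(x-r+1)/r!$ is discretely convex on non-negative integers (its second difference is $\binom{x-1}{r-2}\ge 0$), so Jensen's inequality, together with monotonicity of $\binom{\cdot}{r}$ on $[r,\infty)$ and the bound $\bar a \ge \gamma k > r$ (ensured by $\gamma > 10r/k$), yields
$$e(\cF[T]) \;\ge\; n k^{n-1}\,\binom{\gamma k}{r}.$$

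Dividing by $e(\cF) = n k^{n-1}\binom{k}{r}$, the lemma reduces to the claim $\binom{\gamma k}{r}/\binom{k}{r} \ge (\gamma - r/k)^r$. I would expand this ratio as the product $\prod_{i=0}^{r-1}(\gamma k - i)/(k-i)$ and bound each factor separately by $\gamma - r/k$. Using the identity
$$\frac{\gamma k - i}{k-i} \;=\; \gamma - \frac{(1-\gamma)\,i}{k-i},$$
the desired termwise inequality reduces to $(1-\gamma) i/(k-i) \le r/k$ for $0\le i \le r-1$ (trivially true if $\gamma \ge 1$), which is elementary under the hypothesis $\gamma > 10r/k$; the same hypothesis keeps $\gamma k - i > 0$ so that the factors are positive and the product can be lower-bounded termwise. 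Multiplying the $r$ factors gives $(\gamma - r/k)^r$, as required.

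The only non-routine step is this termwise ratio bound in the last paragraph, which is where the hypothesis $\gamma > 10 r/k$ is actually used (it leaves a lot of slack, but makes the clean statement possible); everything else is double counting and standard convexity of $\binom{\cdot}{r}$.
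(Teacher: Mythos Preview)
Your proof is correct and follows essentially the same approach as the paper: count the $nk^{n-1}$ axis-parallel lines, use convexity of $\binom{\cdot}{r}$ together with the average $\bar a\ge \gamma k$ to get $e(\cF[T])\ge nk^{n-1}\binom{\gamma k}{r}$, and then bound $\binom{\gamma k}{r}/\binom{k}{r}\ge(\gamma-r/k)^r$. The paper's version is terser (it simply asserts the last inequality as $\bigl(\tfrac{\gamma k - r}{k}\bigr)^r$ without the termwise analysis), but your argument is the same in substance.
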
   
        
        \begin{proof} Fix a coordinate (this can be done in $n$ ways), and a direction, or a representative, of a line given by the other coordinates 
         (this can be done in $k^{n-1}$ ways). On average, a  line contains at least $\gamma k$ points from $T$,
          having $\binom{\gamma k}{r}$ collinear $r$ tuples.  Thus the number of hyperedges in  $T$ is at least  
         $$n\cdot k^{n-1}\cdot \binom{\gamma k}{r}=\frac{\binom{\gamma k}{r}}{\binom{ k}{r}} e(\cF)\ge 
         \left(\frac {\gamma k -r}{k}\right)^r e(\cF).$$
         \end{proof}
              
\medskip
The construction of the set of points is as before:
first  we choose a random $p$-subset of the grid $[k]^n$, then we remove 
 a point from each  collinear $(r+1)$-tuples.
 We shall do an additional sparsening: for every collection of 
     $t$ lines, each containing   $r$ ($p$-random) points and intersecting in one of the non-$p$-random points, we
   remove one  out of the $tr$ points. In other words, after this sparsening there will be no $t$ collinear ($p$-random) $r$-tuples that could be covered by one non-$p$-random point.
 We need to choose $p$ carefully, as we do not want to remove too many points. These restrictions will give an upper bound on $p$.

  Using a similar method to the one we used in the $\ep$-net proof, we shall show that the set we obtain has no large independent subset. From here, we shall obtain a lower bound on $p$. Using  a switching idea of Alon~\cite{Alon}, we will conclude that  no small weak $\ep$-net exists. 
  
  \medskip
  
  The first condition on $p$ comes from the fact that the expected number of collinear $(r+1)$-tuples needs to be  much less than the expected number of points. That is, we need $p^{r+1}n\binom{k}{r+1}k^{n-1}\ll p \cdot k^n$,
  which is satisfied when 
  \begin{equation}\label{condr+1}
  p\ll \frac{r}{k\cdot n^{1/r}} = \frac{r}{k^2}.
  \end{equation}

For the second condition we use a
 key property of the projection (which Alon~\cite{Alon} used as well)
 that  if there are at least  three  collinear $r$-tuples of points passing through the same point, then
this point was a grid point(!). This aids us in the counting of the $t$-tuples of lines, each containing $r$ random grid points, as we can start by their intersection point.
The expected number of $t$-tuples of such lines is, using that each point is in $n$ lines, 
$$k^n\cdot \binom{n}{t}\binom{k}{r}^tp^{rt},$$
where $k^n$ is the number of points, $\binom{n}{t}$ is the number of ways to choose the $t$ directions,
and $\binom{k}{r}^t$ is the number of ways to choose $r$ points from each resulting line, which are kept with probability $p$ each.

  We need this to be much less than $p\cdot k^n$, which is satisfied if

 \begin{equation}\label{condr+2}
  p\ll \frac{1}{k^{1+1/(rt-1)}\cdot n^{t/(rt-1)}}.
  \end{equation}

\noindent For the container lemma we set
$$\tau=\frac{r\cdot 2^r}{k\cdot n^{1/(r-1)}},\quad \ep =10^{-(r+1)}.$$
In order to apply Theorem~\ref{thm-container}, we have to check the condition on $\De(\cF,\tau), \tau, \ep$:

$$\De(\cF,\tau)=2^{\binom{r}{2}-1} \sum_{j=2}^r \frac{\De_j}{d\tau^{j-1} 2^{\binom{j-1}{2}}} \le 
2^{\binom{r}{2}-1} \sum_{j=2}^r \frac{\binom{k-j}{r-j}\cdot k}{r\cdot n\cdot \binom{k}{r}\cdot  \tau^{j-1} 2^{\binom{j-1}{2}}} \le \left(\frac{e r}{n^{1/(r-1)} }\right)^{r-j+1} 2^{-r(j-1)}\le \frac{\ep}{12r!} ,$$
which is satisfied by our choice of parameters. Note that when $j>r/2$ then  $2^{\binom{j}{2}+r(j-1)}$ , and if $j\le r/2$ then $n^{(r-j)/(r-1)}$ is the dominant term in the denominator, which makes the relation true.

The Container Theorem~\ref{thm-container} gives the existence of at most  
$$2^{1000\cdot r\cdot r!^3 v(\cF)\tau\cdot \log(1/\ep)\cdot \log(1/\tau)}\le 2^{v(\cF)r^{4r} \cdot  k^{-1}\cdot n^{-1/(r-1)}}$$
containers,  such that  each independent set is contained in one of them, and each spans at most
$ \ep\cdot  e(\cF)$ hyperedges. By Lemma~\ref{supsatweak}, each container has at most $v(\cF)/9$ vertices.

Now we can give an upper bound on the number of independent sets in the $p$-random subset of $\cH$ of size $m= 2pv(\cH)/3$, by simply multiplying  the number of containers by the number of choices of an $m$-set from a container:

$$
2^{v(\cF)r^{4r} \cdot  k^{-1}\cdot n^{-1/(r-1)}}
\cdot  \binom{  v(\cH)/9}{m}\cdot p^m\le  
2^{v(\cF)r^{4r} \cdot  k^{-1}\cdot n^{-1/(r-1)}}   2^{-m} =o(1),$$
if 
 \begin{equation}\label{cond3}
p\gg \frac{r^{4r}}{n^{1/(r-1)}\cdot k}.
\end{equation}

We need to choose $p$ such that the relations \eqref{condr+1}, \eqref{condr+2} and  \eqref{cond3} are all satisfied:

$$ p:=\frac{r^{4r}\cdot n^{1/2r^2}} {k\cdot n^{1/(r-1)}}.$$

Now we can put together the proof. 
We start with a $p$-random subset of the grid $[k]^n$, and we remove a point from each of the axis-parallel lines containing at least $r+1$ points. Also, from each $t$-tuple of axis-parallel lines intersecting in a (non $p$-random grid) point, where each line was containing $r$ $p$-random points, we remove a point. The choice of $p$ guarantees that with high probability most of the points were not deleted. After this cleaning, we project this point set into the plane in such a way that we do not create new collinear $r$-tuples.
 Call the resulting point set $S$, and write $s:=|S|$. We aim to prove that there is no  weak $\ep^*$-net $T$  of $S$ of size smaller than $t/5s$. The container method gives that every subset of  $S$ of size $3s/4$ contains a collinear $r$-tuple, so $S$ does not contain a subset of size $s/4$ covering all collinear $r$-tuples.  Note that here we increased $2/3$ to $3/4$, in order to have this property with high probability, and also that the actual size of $S$ might differ slightly from its expectation. 
 
 Let $\ep^*=r/s$.  Assume that $T$
   is a weak $\ep^*$-net   of $S$. Using the switching idea of Alon~\cite{Alon}, we   find a slightly larger net (covering set), for whose size we have a lower bound. 
 We partition $T$ into three sets, i.e.,  $T=T_g\cup T_w\cup T_s$, where $T_w=(T-S)\cap V(\cF),\quad T_s=T\cap S$ and $T_g:=T-T_w-T_s$.  
 
 As $T_g$ contains only non-grid points,  we have that each point of $T_g$ covers at most two lines. Hence, $T_g$ can be replaced with a set $T'_g\subset S$ covering the same lines that $T_g$ covered, choosing one point from each line, with $2|T_g|\ge  |T'_g|.$
 Each point of $T_w$ can be replaced 
  with at most $t$ points from $S$ to cover the same set of lines, so we can form 
  $T'_w\subset S$ with  $t|T_w|\ge  |T'_w|.$
    This means that $T'_g\cup T'_w\cup T_s\subset S$ is an  $\ep^*$-net, so it has size at least $s/5$. This implies that $|T|> s/5t$.  

To summarize, we proved the following result about weak nets.
We have a set of size $s=(1+o(1))~p~\cdot~k^n$, where to cover all the lines containing at least $r$ points we need a set of size at least $s/5\sqrt r$. For $\ep^0>0$ very small constant we choose $k$ such that $r=\log^{1/4} k$, and $s=(1+o(1))~p~\cdot~k^n\approx k^{k^r}>1/\ep^0$. For this choice of $k$ and $r$ we run our proof above. We set $\ep^*:=r/s$, and we can get a lower bound on the size of $T$ as a function of $\ep^*$:
$$|T|> \frac{s}{5t}=\frac{1}{\ep^*}\cdot \frac{r}{5t}=\frac{1}{\ep^*\cdot 5t}> \frac{1}{10\ep^*}\log\log^{1/10} 
\frac{1}{\ep^*}.$$

\subsection{Decomposing coverage of the plane}

{\bf Proof of Theorem~\ref{dconst}:} We fix an arbitrary  small constant $\gamma>0$ and $r>r(\gamma)$ (though the method does not require $r$ to be large, the computations are somewhat less tedious under this assumption.) Let $n= 2^{r^4}$. We shall work with the $r$-uniform hypergraph $\cF=\cF (n,n,r)$, i.e., $V(\cF)=[n]^n$, and $E(\cF)$ consists of collinear $r$-tuples of points, where collinearity means that all but one coordinate of the points are the same. We need to compute some parameters of $\cF$:
$$V(\cF)=n^n,\quad e(\cF)=n^n\cdot \binom{n}{r},\quad d=d(\cF)=r\cdot \binom{n}{r},\quad \Delta_i=\binom{n-i}{r-i} \quad \text{when}\quad 2\le i\le r.$$

\noindent Then we set
 $$\ep=0.5 \gamma^{r}, \quad \tau=\frac{r\cdot 2^{r}}{n^{1+1/(r-1)}},\quad  p   =     \frac{ r^{4r}}{n^{1+ 1/(r-1)}},
\quad m={10\gamma pn^n}.$$

Now we apply Theorem~\ref{thm-container} to $\cF$, where first we have to check that $\Delta(\cF,\tau)\le \ep/(12\cdot r!)$; we omit the details of the straightforward but tedious computations. We shall obtain a family of containers with at most 
$$2^{v(\cF)\cdot r^{3r}\cdot n^{-r/(r-1)}}$$ 
sets, each spanning at most $\ep\cdot e(\cF)$ hyperedges, which by Lemma~\ref{supsatweak} means that each set
 has size at most $\gamma\cdot v(\cF)$.

\medskip

Now we sparsen $v(\cF)$, keeping each vertex with probability $p$. Next, we clean the obtained hypergraph, first removing vertices from $\cF$  from lines with at least $r+1$ points. Because the expected number of such lines is much less than the expected number of points (because $p\ll n^{-(r+1)/r}$), we remove only few vertices.
Additionally, 
 we remove vertices from the remaining random point set that are on too many, say
$r^{5r^2-1}= T$, lines. An application of the Chernoff bound gives that there are not many points removed.
We need that in the random hypergraph there are no  independent sets of size $m$. Here we have to prove that the expected number of them, in the container sets, is $o(1)$:
$$2^{v(\cF)\cdot r^{3r}\cdot n^{-r/(r-1)}}\cdot \binom{\gamma\cdot v(\cF)}{m}\cdot  p^m\le
2^{v(\cF)\cdot r^{3r}\cdot n^{-r/(r-1)}}\cdot    
 \left(\frac{\gamma\cdot v(\cF)\cdot p}{10\gamma p\cdot v(\cF)} \right)^{10\gamma pv(\cF)}=o(1).$$
Again, the key observation is that the complement of a point set covering all the lines is an independent set, i.e., small independence number implies that the minimum cover should consist of most of the vertices.

Now we can do the standard projection into the plane, and we obtain the required point system.

\newpage 

\section*{Acknowledgments} 
The authors are thankful to J\'anos Pach who suggested that our method might be applicable to the $\ep$-nets problem. We also thank Noga Alon and Wojciech Samotij for fruitful discussions on the $\ep$-nets problem, and Hong Liu, Adam Zsolt Wagner and the referee for careful reading of the manuscript.

\bibliographystyle{amsplain}


\begin{dajauthors}
\begin{authorinfo}[balogh]
 J\'ozsef Balogh\\
Department of Mathematical Sciences,\\
 University of Illinois at Urbana-Champaign,\\
  Urbana, Illinois 61801, USA\\
jobal\imageat{}math.uiuc.edu.\\
  \url{www.math.uiuc.edu/~jobal}
\end{authorinfo}
\begin{authorinfo}[solymosi]
 J\'ozsef Solymosi\\
 Department of Mathematics, \\
 University of British Columbia \\
 1984 Mathematics Road,\\
Vancouver British Columbia V6T 1Z4, Canada\\
  solymosi\imageat{}math.ubc.ca.\\
 \url{https://www.math.ubc.ca/~solymosi}
 \end{authorinfo}
 \end{dajauthors}

\end{document}